\newtheorem{theorem}{Theorem}[section]
\newtheorem{corollary}{Corollary}[section]
\newtheorem{lemma}{Lemma}[section]
\newtheorem{proposition}{Proposition}[section]
\theoremstyle{definition}
\newtheorem{definition}{Definition}[section]
\newtheorem{example}{Example}[section]
\newtheorem{remark}{Remark}[section]
\numberwithin{equation}{section}
\let\origitemize\itemize
\def\itemize{\origitemize\itemsep0pt}
\numberwithin{equation}{section}
\DeclareOldFontCommand{\rm}{\normalfont\rmfamily}{\mathrm}
\DeclareOldFontCommand{\sf}{\normalfont\sffamily}{\mathsf}
\DeclareOldFontCommand{\tt}{\normalfont\ttfamily}{\mathtt}
\DeclareOldFontCommand{\bf}{\normalfont\bfseries}{\mathbf}
\DeclareOldFontCommand{\it}{\normalfont\itshape}{\mathit}
\DeclareOldFontCommand{\sl}{\normalfont\slshape}{\@nomath\sl}
\DeclareOldFontCommand{\sc}{\normalfont\scshape}{\@nomath\sc}
\begin{document}

\title{
	Nonlinear Strict Cone Separation Theorems in Real Normed Spaces
}

\author{
Christian G\"unther\thanks{Leibniz Universität Hannover, Institut f\"ur Angewandte Mathematik, Welfengarten 1, 30167 
		Hannover, Germany
		(\href{mailto:c.guenther@ifam.uni-hannover.de}{c.guenther@ifam.uni-hannover.de}).}
	\and Bahareh Khazayel\thanks{Martin Luther University Halle-Wittenberg, Faculty
		of Natural Sciences II, Institute of Mathematics, 06099 Halle (Saale), Germany
		(\href{mailto:Bahareh.Khazayel@mathematik.uni-halle.de}{Bahareh.Khazayel@mathematik.uni-halle.de}).}
	\and Christiane Tammer\thanks{Martin Luther University Halle-Wittenberg, Faculty
		of Natural Sciences II, Institute of Mathematics, 06099 Halle (Saale), Germany
		(\href{mailto:Christiane.Tammer@mathematik.uni-halle.de}{Christiane.Tammer@mathematik.uni-halle.de}).}}

\maketitle

\begin{center}
{\large This article is dedicated to Professor Alfred G\"opfert.}\\[0.5cm]
	\textbf{Abstract}
\end{center}
\begin{abstract}
	In this paper, we derive some new results for the separation of two not necessarily convex cones by  a (convex) cone / conical surface in real (reflexive) normed spaces. In essence, we follow the nonlinear and nonsymmetric separation approach developed by 
    Kasimbeyli (2010, SIAM J. Optim. 20), which is based on augmented dual cones and Bishop-Phelps type (normlinear) separating functions.
    Compared to Kasimbeyli's separation theorem, we formulate our theorems for the separation of two cones under weaker conditions (concerning convexity and closedness requirements) with respect to the involved cones. By a new characterization of the algebraic interior of augmented dual cones in real normed spaces, we are able to establish relationships between our cone separation results and the results derived by Kasimbeyli (2010, SIAM J. Optim. 20) and by Garc\'{i}a-Casta\~{n}o, Melguizo-Padial and Parzanese (2023, Math. Meth. Oper. Res. 97). 
\end{abstract}


\begin{flushleft}
	\textbf{Keywords:} Separation theorem; cone separation; nonconvex cone; norm-base; augmented dual cone; Bishop-Phelps cone; normlinear function
\end{flushleft}

\begin{flushleft}
	\textbf{Mathematics subject classifications (MSC 2010):} 46A22; 49J27; 65K10; 90C48 
\end{flushleft}

\section{Introduction}
Classical separation arguments from Convex Analysis are main tools in optimization theory, especially for deriving duality assertions and optimality conditions (see G\"opfert \cite{Goe1973}, G\"opfert {\rm et al.} \cite{GoeRiaTamZal2023}, Jahn \cite{Jahn2011}, Rockafellar \cite{Roc1970}, Z{\u{a}}linescu \cite{Zal2002}  and references therein).
Cone separation theorems, i.e., theorems related to the separation of two cones by a hyperplane or a certain conical surface, are studied by several authors in the literature, among others by Ad\'{a}n and Novo \cite[Th. 2.1]{AdanNovo2004}, G\"opfert {\rm et al.} \cite[Sec. 2.4.3]{GoeRiaTamZal2023}, G\"unther, Khazayel and Tammer \cite{GuenKhaTam22}, Henig \cite{Henig}, Jahn \cite[Th. 3.22]{Jahn2011}, \cite[Sec. 3.7]{Jahn2023}, Kasimbeyli \cite[Th. 4.3]{Kasimbeyli2010}, Khazayel {\rm et al.} \cite[Sec. 2.3]{Khazayel2021a}, Nieuwenhuis \cite{Nieuwenhuis}, and Novo and Z\u{a}linescu \cite[Cor. 2.3]{NovoZali2021}.
It is well known that such cone separation theorems are useful in certain fields of optimization (for instance, for deriving scalarization results for nonconvex vector optimization problems; see, e.g., Bo{\c{t}}, Grad and Wanka \cite{BotGradWanka}, Garc\'{i}a-Casta\~{n}o, Melguizo-Padial and Parzanese \cite{CastanoEtAl2023}, Gerth and Weidner \cite{GerthWeidner}, Eichfelder and Kasimbeyli \cite{EichfelderKasimbeyli2014}, G\"opfert {\rm et al.} \cite{GoeRiaTamZal2023}, Grad \cite{Grad2015}, Kasimbeyli \cite{Kasimbeyli2010}, \cite{Kasimbeyli2013},  Kasimbeyli {\rm et al.} \cite{Kasimbeyli2019}, and  Tammer and Weidner \cite{TammerWeidner2020}). 

In the literature, there are different concepts for the nonlinear separation of two cones by a cone/conical surface (see, e.g., G\"unther, Khazayel and Tammer \cite{GuenKhaTam22}, Henig \cite{Henig}, Kasimbeyli \cite{Kasimbeyli2010}, Nehse \cite{Nehse1981}, Nieuwenhuis \cite{Nieuwenhuis}). In this paper, we adapt two separation concepts recently used by G\"unther, Khazayel and Tammer \cite{GuenKhaTam22} for separating two (not necessarily convex) cones by a cone/conical surface. Consider two nontrivial cones $\Omega^1, \Omega^2 \subseteq E$ and a (closed, convex) cone $C \subseteq E$ with nonempty interior (in what follows ${\rm int}\, C$, ${\rm bd}\, C$,  ${\rm cl}\, C$ stand for the interior, boundary, closure of $C$) in a real normed space $(E, ||\cdot||)$. Then, the cones $\Omega^1$ and $\Omega^2$ are said to be \\
\begin{itemize}
    \item[$\bullet$] { (weakly) separated by (the boundary of) the cone $C$} if 
    $$
     \Omega^1 \subseteq {\rm cl}\, C \quad \mbox{and} \quad \Omega^2 \subseteq E \setminus {\rm int}\, C.   
    $$
    \item[$\bullet$] { strictly separated by (the boundary of) the cone $C$} if 
    $$
    \Omega^1 \setminus \{0\} \subseteq {\rm int}\, C \quad \mbox{and} \quad \Omega^2 \setminus \{0\} \subseteq E \setminus {\rm cl}\, C.
    $$
\end{itemize}

It is easy to check that strict separation also ensures weak separation. Assume that $C$ is a cone in $E$ with nonempty interior. Then, $\Omega^1$ and $\Omega^2$ are (weakly, strictly) separated by the cone $C$ if and only if $\Omega^2$ and $\Omega^1$ are (weakly, strictly) separated by the cone $E \setminus ({\rm int}\, C)$. By imposing convexity of $C$ in the separation concept, one is loosing this symmetry property of the  approach, since $E \setminus ({\rm int}\, C)$ is not necessarily a convex cone. 

\begin{example}
    Consider $E = \mathbb{R}^2$, $\Omega^1 = \mathbb{R}^2_+$ and $\Omega^2 = E \setminus ({\rm int}\,\mathbb{R}^2_+)$. Then, $\Omega^1$ and $\Omega^2$ are weakly separated by (the boundary of the) cone $C = \Omega^1$ (and there is actually no other separating cone). However, there does not exist a convex cone such that $\Omega^2$ and $\Omega^1$ are weakly separated (since the smallest convex set containing $\Omega^2$ is the whole space $\mathbb{R}^2$).  Similarly, one can see that the symmetry property of the strict separation approach fails if one assumes that $C$ is convex in the separation concept.  
\end{example}

Notice that cones from the set
$$
\{C \subseteq E \mid C \text{ is a convex cone with } \Omega^1 \setminus \{0\} \subseteq {\rm int}\, C\}
$$
are known in the vector optimization / order theory literature as dilating cones for $\Omega^1$  (Henig \cite{Henig2} used such dilating cones to define a proper efficiency solution concept in vector optimization; see also Grad \cite[Ch. 3]{Grad2015}, G\"unther, Khazayel and Tammer \cite{Khazayel2021b}, and Khan, Tammer and Z{\u{a}}linescu \cite[Sec. 2.4]{Khanetal2015}).  In our nonsymmetric strict separation approach, the separating cone $C$ is always a dilating cone for the first given cone $\Omega^1$.  Knowing for which cone a dilation cone is obtained is useful from the point of view of applications in vector optimization (for instance in proper efficiency solution concepts). 

\begin{remark}
Furthermore, the following fact are easy to observe:
\begin{itemize}
\item[$\bullet$] If $C$ is a convex cone, then 
$\Omega^1$ and $\Omega^2$ are (weakly, strictly) separated by  $C$ if and only if ${\rm conv}(\Omega^1)$ and $\Omega^2$ are (weakly, strictly) separated by  $C$.
\item[$\bullet$] If $C$ is a closed, convex cone, then 
$\Omega^1$ and $\Omega^2$ are weakly separated by $C$ if and only if $\widehat \Omega^1  := {\rm cl}({\rm conv}\,\Omega^1) $ and $\Omega^2$ are weakly separated by $C$.
\item[$\bullet$] Strict separation of $\Omega^1$ and $\Omega^2$ by $C$ ensures $\Omega^1 \subseteq {\rm cl}\, C$ and $\Omega^2 \setminus \{0\} \subseteq E \setminus {\rm cl}\, C$, hence $\Omega^1 \cap \Omega^2 = \{0\}$ is a necessary  condition for strict separation.
\item[$\bullet$] 
If $C$ is closed and convex, then $\widehat \Omega^1\subseteq {\rm cl}\, C$, and so $\widehat \Omega^1 \cap \Omega^2 = \{0\}$ is necessary for strict separation. 
\item[$\bullet$] 
Assume that ${\rm cl}\, C$ is pointed. Then, the pointedness of $\Omega^1$ is necessary for weak separation of $\Omega^1$ and $\Omega^2$ (since $\Omega^1 \subseteq {\rm cl}\, C$). However, $\Omega^2$ can be a not pointed cone, and so the two cones to be separated do not play symmetric roles in our approach. 
\item[$\bullet$] If $C$ is pointed, closed and convex, then the pointedness of $\widehat \Omega^1$ is necessary for (weak, strict) separation of $\Omega^1$ and $\Omega^2$.

\end{itemize}
\end{remark}
 
In our upcoming cone separation theorems, we like to consider a (pointed, closed and convex) cone $C$ that can be represented by the sublevel set (w.r.t. the level 0) of a (lower semicontinuous, subadditive, convex, positively homogeneous) function $\varphi: E \to \mathbb{R}$. More precisely, $\varphi$ should satisfy the cone representation properties (see Jahn \cite{Jahn2023})
$$
{\rm cl}\, C = \{x \in E \mid \varphi(x) \leq 0 \} \quad \mbox{and} \quad {\rm int}\, C = \{x \in E \mid \varphi(x) < 0 \}.
$$
In the work by Kasimbeyli \cite{Kasimbeyli2010} (compare also \cite{AcaKas21}, \cite{BagAdi13}, \cite{CastanoEtAl2023}, \cite{GasOzt06}, \cite{GuenKhaTam22}, \cite{OztGur15}), the separating function $\varphi$ is given by a Bishop-Phelps type (normlinear) function $\varphi_{x^*, \alpha}: E \to \mathbb{R}$ (where some linear continuous function $x^*$ in the topological dual space $E^*$ of $E$, the norm $\mid \mid \cdot \mid \mid $ on $E$, and some $\alpha \geq 0$ are involved), which is defined by
\begin{equation}\label{f12}
\varphi_{x^*, \alpha}(x)  := x^*(x) + \alpha ||x|| \quad \mbox{for all } x \in E.
\end{equation}
The type of function $\varphi_{x^*, \alpha}$ in \eqref{f12} is associated to so-called Bishop-Phelps cones and dates back to the work by Bishop and Phelps \cite{BP1962}.  Recall, for any $y^* \in E^*$, the well known formulation of a Bishop-Phelps cone \cite{BP1962} (see also Ha and Jahn \cite[Rem. 2.1]{HaJahn2021}) is given by
$$
C(y^*) := \{ x \in E \mid y^*(x) \geq ||x||\}.
$$
In particular, if $\alpha > 0$, then 
$$C(-\alpha^{-1} x^*) = \{ x \in E \mid \varphi_{x^*, \alpha}(x) \leq 0\}.$$
It is known that Bishop-Phelps cones have a lot of useful properties and there are interesting applications in variational analysis and nonconvex optimization (see Phelps \cite{Phe93}) as well as  in vector optimization (see, e.g., Eichfelder \cite{Eichfelder2014}, Eichfelder and Kasimbeyli \cite{EichfelderKasimbeyli2014}, Ha \cite{Ha2022}, Ha and Jahn \cite{HaJahn2017}, \cite{HaJahn2021}, Jahn \cite{Jahn2009}, \cite[p. 159-160]{Jahn2011},  Kasimbeyli \cite{Kasimbeyli2010}, Kasimbeyli and Kasimbeyli \cite{KasKas17}). 
It is known that $C(y^*)$ is a closed, pointed, convex cone. If $||y^*||_* > 1$ (where $||\cdot||_*$ denotes the dual norm of $||\cdot||$), then $C(y^*)$ is nontrivial and 
$$C_>(y^*) := \{ x \in E \mid y^*(x) > ||x||\} = {\rm int}\, C(y^*) \neq \emptyset.$$
Notice, $\varphi_{x^*, \alpha}$ in \eqref{f12} is also known as a ``normlinear function'', since it is a combination of a norm function with a linear (continuous) function (see the recent paper by Zaffaroni \cite{Zaffaroni2022} with $x^* \in E^*$ and $\alpha \in \mathbb{R}$ in \eqref{f12}). 

\bigskip

The paper is structured as follows. In Section \ref{sec:not_and_pre}, we present some preliminaries (related  to cones, bases, dual cones, classical linear separation theorems) in real normed spaces. Theorem \ref{th:properties_top_dual_cones} provides useful characterizations of the (algebraic) interior of the dual cone of a nontrivial cone. 

Section \ref{sec:augmented_dual_cones} is devoted to the study of augmented dual cones and their properties. Theorem \ref{th:0notinclSK} discusses the nonemptyness of certain subsets of the augmented dual cone, while Theorem \ref{th:solidness_augmented_dual_cones} gives new characterizations of the algebraic interior of the augmented dual cone of a nontrivial cone in real (reflexive) normed spaces.

In Section \ref{sec:nonSepAppAndBPCones}, we present our nonlinear separation approach based on Bishop-Phelps cones. The main role plays the nonlinear separating function given in \eqref{f12}.

Our main nonlinear strict cone separation theorems (namely Theorems \ref{th:sep_cones_Banach_1} and \ref{th:sep_cones_Banach_3}) are given in Section \ref{sec:cone_separation_normed}. As a direct consequence of our results, we also get a separation theorem (see Proposition \ref{cor:sep_main_top_kasimbeyli}) which is similar to the one derived by Kasimbeyli \cite{Kasimbeyli2010}. With the aid of the characterizations derived in Theorem \ref{th:solidness_augmented_dual_cones}, we are able to establish relationships between our cone separation results and the results derived by Kasimbeyli \cite{Kasimbeyli2010} and by Garc\'{i}a-Casta\~{n}o, Melguizo-Padial and Parzanese \cite{CastanoEtAl2023}.

Section \ref{sec:conclusion} provides some concluding remarks and an outlook for further research.

\section{Notations and Preliminaries} \label{sec:not_and_pre}

\subsection{Fundamentals of Real Normed Spaces}	
We consider a real normed space $(E, ||\cdot||)$ throughout this article. As usual,
$$
E^* = (E, ||\cdot||)^* = \{x^*: E \to \mathbb{R} \mid x^* \mbox{ is linear and continuous}\}
$$
denotes the topological dual space of $E$. 
It is known that $E^*$ is a real Banach space endowed with the dual norm $||\cdot||_*: E^* \to \mathbb{R}$ of the norm $||\cdot||$, which is defined by 
$$||x^*||_* := \sup_{||x|| \leq 1} |x^*(x)| \quad \text{for all } x^* \in E^*.$$
For the underlying strong topology $\tau$ on $E$ we will use the topology that is induced by the norm $||\cdot||$ (known as norm topology).
The so-called weak topology on $E$, that we denote by $\tau_w := \sigma(E, E^*)$, is the coarsest topology on $E$ making all the seminorms $\{\psi_{x^*} \mid x^* \in E^*\}$ continuous, where $\psi_{x^*}: E \to \mathbb{R}$ is defined by 
$
\psi_{x^*}(x) := |x^*(x)|$ for all $x \in E$. Notice that we have $(E, ||\cdot||)^* = (E, \tau_w)^*$.
Beside the norm topology in $E^*$ one can also consider the so-called weak$^*$ topology on $E^*$ that we denote by $\tau_{w^*} := \sigma(E^*, E)$ (see, e.g., Jahn \cite[Def. 1.38 (c)]{Jahn2011}).

Consistently, the set of nonnegative real numbers is denoted by $\mathbb{R}_+$, while $\mathbb{P} := \mathbb{R}_{++}$ denotes the set of positive real numbers. 
Given a set $\Omega \subseteq E$, we denote by ${\rm cl}\,\Omega$, ${\rm bd}\,\Omega$, and ${\rm int}\,\Omega$ the closure, the boundary, and the interior of $\Omega$ w.r.t. the norm topology. 
As usual, the algebraic interior (the core) of $\Omega$ is defined by
$$
{\rm cor}\, \Omega := \{x \in \Omega \mid \forall\, v \in E\; \exists\, \varepsilon > 0: \; x + [0, \varepsilon] \cdot v \subseteq \Omega\}.
$$
It is known that
\begin{align*}
{\rm int}\, \Omega \subseteq {\rm cor}\, \Omega \subseteq \Omega  \subseteq  {\rm cl}\, \Omega = ({\rm int}\, \Omega) \cup {\rm bd}\, \Omega.
\end{align*}

We bring to mind that $\Omega \subseteq E$ is named
\begin{itemize}
\item[$\bullet$] closed if ${\rm cl}\, \Omega = \Omega$;
\item[$\bullet$] compact if every family of open sets whose union includes 
$\Omega$ contains a finite number of sets whose union includes $\Omega$;
\item[$\bullet$] topologically solid if ${\rm int}\, \Omega \neq \emptyset$;
\item[$\bullet$] algebraically solid if ${\rm cor}\, \Omega \neq \emptyset$.
\end{itemize}

Likewise, for any set $\Omega \subseteq E^*$ we will use all the topological notions (based on the dual norm topology in $E^*$) and algebraic notions introduced above.

\begin{remark}[Strong and weak topologies] \label{rem:StrongAndWeak2}
Let $(E, ||\cdot||)$ be a real normed space. 
Given any set $\Omega \subseteq E$, the weak closure of $\Omega$, that is denoted by ${\rm cl}_{w}\, \Omega$, is the closure of $\Omega$ w.r.t. the weak topology $\tau_w$.
A set $\Omega \subseteq E$ is called weakly closed if ${\rm cl}_{w}\, \Omega = \Omega$; weakly compact if $\Omega$ is $\tau_w$-compact. 
It is well known that ${\rm cl}\, \Omega \subseteq {\rm cl}_{w}\, \Omega$, hence the weak closedness of $\Omega$ implies the closedness of $\Omega$. 
Moreover, it is known that the weak closure contains all weakly sequential limit points of $\Omega$, i.e.,
$$\{x \in E \mid \exists\, (x_n) \subseteq \Omega:\; x_n \rightharpoonup x \mbox{ for } n \to \infty\} \subseteq {\rm cl}_{w}\, \Omega,$$
where $x_n \rightharpoonup x$ means that $(x_n)$ converges to $x$ for $n \to \infty$ w.r.t. the weak topology $\tau_w$.
\end{remark}

\subsection{Convex Sets, Cones and their Bases} \label{subsec:convex_sets_norms_snorms}

As usual, we call a set $\Omega \subseteq E$ convex if $(1-\lambda) x + \lambda \overline{x} \in \Omega$ for all $x, \bar x \in \Omega$ and $\lambda \in (0,1)$. The smallest convex set of $E$ containing $\Omega$ (i.e., the convex hull of $\Omega$) is denoted by ${\rm conv}\,\Omega$. It is easy to check that $\Omega$ is convex if and only if ${\rm conv}\,\Omega = \Omega$.
Moreover, if $\Omega$ is convex and either ${\rm int}\, \Omega \neq \emptyset$ or $E$ is a Banach space and $\Omega$ is closed, then ${\rm cor}\, \Omega = {\rm int}\, \Omega$ (see Barbu and Precupanu \cite[Cor. 1.20, Rem. 1.24]{BarbuPrecupanu12}). A special convex and closed set in the real normed space $E$ is given by the unit ball of $E$ w.r.t. the norm $||\cdot||$, which is denoted by $$\mathbb{B} := \{x \in E \mid ||x|| \leq 1\}.$$ 
Obviously, we have $\mathbb{B} = {\rm conv}(\mathbb{S})$,
where
$$\mathbb{S} := \{x \in E \mid ||x|| = 1\}$$
is the unit sphere of $E$ w.r.t. the norm $||\cdot||$.

\begin{remark}[Strong and weak topologies] \label{rem:StrongAndWeak3} The following facts are known:
\begin{itemize}
    \item[$\bullet$] According to Jahn \cite[Th. 3.24]{Jahn2011}, for any nonempty, convex subset $\Omega$ of a real
normed space $E$, the set $\Omega$ is closed if and only if it is weakly closed,  and moreover, ${\rm cl}_w\, \Omega = {\rm cl}\, \Omega$. 
\item[$\bullet$] Given any reflexive Banach space $(E, ||\cdot||)$, it is known that the unit ball $\mathbb{B}$ is a nonempty, convex, weakly compact set in $E$. Any weakly closed subset of this unit ball (e.g. if this subset is closed and convex) is weakly compact as well. 
\item[$\bullet$] In any infinite dimensional real normed space $(E, ||\cdot||)$, we have
$$\mathbb{S} \neq \mathbb{B} = {\rm cl}_w\,\mathbb{S}$$
(see, e.g., Kesavan \cite[Ex. 5.1.2]{Kesavan23}),
hence $||\cdot||$ is not weakly continuous (i.e., not continuous w.r.t. the weak topology $\tau_w$). However, $||\cdot||$ is continuous w.r.t the norm topology.
\end{itemize}
\end{remark}

\bigskip

\noindent A set $K \subseteq E$ is called a cone if $0 \in K = \mathbb{R}_+ \cdot K$. A cone $K \subseteq E$ is said to be
\begin{itemize}
	\item[$\bullet$] convex if $K + K = K$ (or equivalently, $K$ is a convex set); 
	\item[$\bullet$] nontrivial if $\{0\} \neq K \neq E$;
	\item[$\bullet$] pointed if $\ell(K) : = K \cap (-K) = \{0\}$. 
\end{itemize}
Given a convex cone $K \subseteq E$, the set $\ell(K)$ is known as the lineality space of $K$.  In this case, $K$ is a linear subspace of $E$ if and only if $K = \ell(K)$.
The topological dual cone of a cone $K \subseteq E$ is given by
$$
K^+ := \{y^* \in E^* \mid \forall\, k \in K:\; y^*(k) \geq 0\}.
$$
Furthermore, the subset
$$
K^\# := \{y^* \in E^* \mid \forall\, k \in K \setminus \{0\}:\; y^*(k) > 0\}
$$ 
of $K^+$ is of special interest.
It is important to mention that both sets $K^+$ and $K^\#$ are convex for any (not necessarily convex) cone $K \subseteq E$. Moreover, one has 
\begin{equation}
\label{eq:K+=convK+=clvonvK+}
  K^{+} = ({\rm conv}\,K)^+ = ({\rm cl}({\rm conv}\,K))^+ \quad \text{and} \quad ({\rm cl}({\rm conv}\,K))^\# \subseteq K^{\#} = ({\rm conv}\,K)^\#,   
\end{equation}
but the inclusion $({\rm cl}({\rm conv}\,K))^\# \subseteq K^{\#}$ can be strict (see G\"opfert et al.  \cite[p. 55]{GoeRiaTamZal2023}).

In the following definition, we recall a general base concept for cones in real linear topological spaces (cf. G\"opfert et al.  \cite[Def. 2.1.42]{GoeRiaTamZal2023}). 

\begin{definition} \label{def:baseKnowngeneral1}
	Consider a nontrivial cone $K \subseteq E$. A set $B \subseteq K$ is called a base for $K$, if
	$B$ is a nonempty set,  and $K = \mathbb{R}_+ \cdot B$ with $0 \notin {\rm cl}\, B$.
    Moreover, $K$ is said to be well-based if there exists a bounded, convex base.
\end{definition}

\begin{remark}
Taking into account \eqref{eq:K+=convK+=clvonvK+}, for any nontrivial cone $K \subseteq E$, it is well-known that
\begin{itemize}
    \item \cite[Prop. 2.2.23]{GoeRiaTamZal2023}:
    $${\rm conv}\,K \text{ has a convex base} \iff  K^\# \neq \emptyset ,$$
    \item \cite[Prop. 2.2.32]{GoeRiaTamZal2023}, \cite{Jam1970}:
    \begin{equation} \label{eq:wellbasedIntK+neqEmpty}
        {\rm conv}\,K \text{ is well-based}\iff {\rm cl}({\rm conv}\,K) \text{ is well-based} \iff {\rm int}\,K^{+}\neq\emptyset .
    \end{equation} 
\end{itemize}
\end{remark}

Moreover, we recall another (algebraic) base concept for cones in linear spaces (cf. G\"opfert et al. \cite[Def. 2.1.14]{GoeRiaTamZal2023}, Jahn \cite[Def. 1.10 (d)]{Jahn2011}).

\begin{definition} \label{def:baseKnowngeneral}
	Consider a nontrivial cone $K \subseteq E$. A set $B \subseteq K \setminus \{0\}$ is called an algebraic base for $K$, if
	$B$ is a nonempty set,  and every $x \in K \setminus \{0\}$ has a unique representation of the form 
	$x = \lambda b$ for some $\lambda > 0$ and some $b \in B$.
\end{definition}

\begin{remark}
If $B$ is base in the sense of Definition \ref{def:baseKnowngeneral1} or Definition \ref{def:baseKnowngeneral} for the nontrivial cone $K \subseteq E$, then $K = \mathbb{R}_+ \cdot B$ and $K \setminus \{0\} = \mathbb{P} \cdot B$.
\end{remark}

Our focus will be on a specific type of base in normed spaces, which we will recall in the following definition.

\begin{definition} \label{def:normbase}
	A set $B \subseteq K$ given by
	$$
	B := B_{K} := \{x \in K \mid ||x|| = 1\} = K \cap \mathbb{S}
	$$
	is called the
	norm-base for the nontrivial cone $K \subseteq E$.
\end{definition}

Throughout the paper, for any nontrivial cone $C \subseteq E$, we define
$$
S_C := {\rm conv}(B_C) \quad \text{and} \quad S_C^0 := {\rm conv}(\{0\} \cup B_C).
$$

\begin{remark} \label{rem:baseSK}
Consider a nontrivial cone $K \subseteq E$. Notice that the norm-base $B_K$ is a base for $K$ in the sense of both Definitions \ref{def:baseKnowngeneral1} and \ref{def:baseKnowngeneral}.
It is easy to check that 
\begin{align*}
 {\rm conv}\, K &= \mathbb{R}_+ \cdot S_{{\rm conv}\, K} = \mathbb{R}_+ \cdot S_{K},\\
 {\rm cl}({\rm conv}\, K) & = \mathbb{R}_+ \cdot S_{{\rm cl}({\rm conv}\, K)} = \mathbb{R}_+ \cdot {\rm cl}\, S_{{\rm cl}({\rm conv}\, K)} = {\rm cl}(\mathbb{R}_+ \cdot {\rm cl}\, S_{K}).
\end{align*}
Furthermore, if $0 \notin {\rm cl}\,S_K$, then $\mathbb{R}_+ \cdot {\rm cl}\, S_{K}$ is closed by the boundedness of ${\rm cl}\, S_{K} \, (\subseteq \mathbb{B})$ (see Göpfert et al. \cite[Lem. 2.1.43 (iii)]{GoeRiaTamZal2023}), and so 
$$
{\rm cl}({\rm conv}\, K) = \mathbb{R}_+ \cdot {\rm cl}\,S_{K}.
$$
Thus, we get the following equivalences:
    \begin{align*}
    & 0 \notin {\rm cl}\,S_K\\
      \Longleftrightarrow\quad  & S_K \text{ is a bounded, convex base (in the sense of Definition \ref{def:baseKnowngeneral1}) for } {\rm conv}\, K\\
   \Longleftrightarrow \quad &  {\rm cl}\,S_K \text{ is a closed, bounded, convex base (in the sense of Definition \ref{def:baseKnowngeneral1}) for } {\rm cl}({\rm conv}\, K).
    \end{align*}
Taking into account \eqref{eq:wellbasedIntK+neqEmpty} we have 
\begin{equation} \label{eq:0NotInClSKIntK+NeqEmpty}
0 \notin {\rm cl}\,S_K \Longrightarrow  {\rm conv}\, K \text{ (respectively, } {\rm cl}({\rm conv}\, K) \text{) is well-based} \iff {\rm int}\, K^+ \neq \emptyset.
\end{equation}
A more detailed analysis of the condition $0 \notin {\rm cl}\,S_K$ is given in the Theorems \ref{th:0notinclSK} and \ref{th:solidness_augmented_dual_cones}.
\end{remark}

Given a nontrivial cone $K \subseteq E$ with the norm-base $B_{K}$, we consider the set 
$$
K^{w\#} := \{x^* \in E^* \mid \forall\, x \in {\rm cl}_w\,B_K:\;  x^*(x) > 0\} \;\; (\subseteq K^\#).
$$

In the following theorem, we derive useful representations for the algebraic interior of the topological dual cone $K^{+}$ of a nontrivial (not necessarily convex) cone $K \subseteq E$. 

\begin{theorem} \label{th:properties_top_dual_cones}
	Assume that $E$ is a real normed space, $K \subseteq E$ is a nontrivial cone, and $B_{K}$ is the norm-base of $K$. Then, the following assertions are valid:
    \begin{itemize}
    	\item[$1^\circ$] $\{x^* \in E^* \mid \inf_{x \in B_K}\, x^*(x) > 0\} \subseteq {\rm cor}\, K^{+} \subseteq K^\# \subseteq K^+ = \{x^* \in E^* \mid \inf_{x \in B_K}\, x^*(x) \geq 0\}$.
        \item[$2^\circ$] If ${\rm cl}_w\,B_K$ is weakly compact (e.g. if $E$ is also reflexive), then ${\rm cor}\, K^{+} \supseteq K^{w\#}$.
        \item[$3^\circ$] If $0 \notin {\rm cl}_w\,B_K$, then ${\rm cor}\, K^{+} \subseteq K^{w\#}$.
        \item[$4^\circ$] If $B_K$ is weakly compact, or $K$ is weakly closed (e.g. if $K$ is closed and convex) and ${\rm cl}_w\,B_K$ is weakly compact with $0 \notin {\rm cl}_w\,B_K$, then ${\rm cor}\, K^{+}  = K^{w\#} = K^\#$.
        \item[$5^\circ$] If $E$ is also reflexive and $0 \notin {\rm cl}_w\,B_{{\rm cl}({\rm conv}\, K)}$ (e.g. if $E$ has finite dimension), then\\ ${\rm cor}\, K^{+}  = ({\rm cl}({\rm conv}\, K))^{w\#} = ({\rm cl}({\rm conv}\, K))^\#$.
    \end{itemize}
\end{theorem}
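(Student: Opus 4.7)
The plan is to combine elementary perturbation arguments (for $1^\circ$), weak continuity of continuous linear functionals together with weak compactness (for $2^\circ$), and a Hahn--Banach-based contradiction argument (for $3^\circ$); parts $4^\circ$ and $5^\circ$ will then follow by combining $2^\circ$ and $3^\circ$ with the always-valid inclusion $K^{w\#} \subseteq K^\#$ and with the identification $K^+ = ({\rm cl}({\rm conv}\, K))^+$ from \eqref{eq:K+=convK+=clvonvK+}. For $1^\circ$, the equality $K^+ = \{x^* \in E^* \mid \inf_{x \in B_K} x^*(x) \geq 0\}$ is immediate from $K = \mathbb{R}_+ \cdot B_K$ and linearity. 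Given $x^*$ with $\alpha := \inf_{B_K} x^*(x) > 0$ and any $v^* \in E^*$, for every $b \in B_K$ and $t \in [0, \alpha/(1 + \|v^*\|_*)]$ one has $(x^* + tv^*)(b) \geq \alpha - t\|v^*\|_* > 0$ (using $\|b\| = 1$), so $x^* + tv^* \in K^+$ and hence $x^* \in {\rm cor}\, K^+$. For ${\rm cor}\, K^+ \subseteq K^\#$, I would assume $x^* \in {\rm cor}\, K^+$ satisfies $x^*(k_0) = 0$ for some $k_0 \in K \setminus \{0\}$, pick $v^* \in E^*$ with $v^*(k_0) = -1$ via Hahn--Banach, and derive the contradiction $(x^* + tv^*)(k_0) = -t < 0$ for every $t > 0$; the inclusion $K^\# \subseteq K^+$ is obvious.

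For $2^\circ$, take $x^* \in K^{w\#}$; since $x^*$ is weakly continuous and strictly positive on the $\tau_w$-compact set ${\rm cl}_w\, B_K$, it attains a strictly positive infimum there, so $\inf_{B_K} x^*(x) > 0$ and $1^\circ$ yields $x^* \in {\rm cor}\, K^+$. For $3^\circ$, take $x^* \in {\rm cor}\, K^+$ and assume, for contradiction, that some $\bar x \in {\rm cl}_w\, B_K$ satisfies $x^*(\bar x) \leq 0$. Weak continuity of $x^*$ together with $x^*(B_K) \subseteq [0,\infty)$ forces $x^*(\bar x) = 0$, and the hypothesis $0 \notin {\rm cl}_w\, B_K$ gives $\bar x \neq 0$, so by Hahn--Banach one can pick $v^* \in E^*$ with $v^*(\bar x) < 0$. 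Choosing $\epsilon > 0$ with $x^* + \epsilon v^* \in K^+$, the set $\{x \in E \mid (x^* + \epsilon v^*)(x) < 0\}$ is weakly open, contains $\bar x \in {\rm cl}_w\, B_K$, and therefore meets $B_K$, contradicting $x^* + \epsilon v^* \in K^+$.

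For $4^\circ$ and $5^\circ$: the inclusion $K^{w\#} \subseteq K^\#$ always holds, since $K = \mathbb{R}_+ \cdot B_K$. In case (a) of $4^\circ$, weak compactness of $B_K$ gives $B_K = {\rm cl}_w\, B_K$, so $K^{w\#} = K^\#$ directly, and $0 \notin B_K$ lets me invoke $2^\circ$ and $3^\circ$ to close the three-way equality. In case (b), weak closedness of $K$ together with $0 \notin {\rm cl}_w\, B_K$ yields ${\rm cl}_w\, B_K \subseteq K \setminus \{0\}$, hence $K^\# \subseteq K^{w\#}$, and $2^\circ, 3^\circ$ again complete the argument. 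For $5^\circ$, set $\widetilde K := {\rm cl}({\rm conv}\, K)$; by \eqref{eq:K+=convK+=clvonvK+}, $K^+ = \widetilde K^+$. The cone $\widetilde K$ is closed and convex, hence weakly closed by Remark \ref{rem:StrongAndWeak3}; reflexivity makes $\mathbb{B}$ weakly compact, so the weakly closed subset ${\rm cl}_w\, B_{\widetilde K} \subseteq \mathbb{B}$ is weakly compact, and case (b) of $4^\circ$ applied to $\widetilde K$ yields the claim. The main obstacle is the argument in $3^\circ$: one must convert a violation at a mere weak limit point $\bar x$ of $B_K$ into a violation at an actual element of $B_K$, which is exactly what weak continuity of $x^* + \epsilon v^*$ achieves by turning the sign condition into a weakly open constraint.
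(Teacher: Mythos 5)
Your proof is correct and follows essentially the same route as the paper's: the same perturbation argument in $1^\circ$, weak compactness plus attainment in $2^\circ$, a Hahn--Banach contradiction in $3^\circ$ (you pull the violation back to $B_K$ via a weakly open sublevel set, where the paper works with $({\rm cl}_w\,K)^+$ directly --- both are valid), and the same reductions for $4^\circ$ and $5^\circ$. The only substantive addition is that you supply an explicit core-perturbation proof of ${\rm cor}\, K^{+} \subseteq K^\#$, which the paper merely cites as known.
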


\begin{proof} {\color{white}.}
    \begin{itemize}
	\item[$1^\circ$] 
    It is known that the inclusion ${\rm cor}\, K^{+} \subseteq K^{\#}$ is true (for any nontrivial cone $K \subseteq E$
    with $E$ a real topological linear space for which $E^*$ separates the elements of $E$). 
    Let us prove that $\{x^* \in E^* \mid \inf_{x \in B_K}\, x^*(x) > 0\} \subseteq {\rm cor}\, K^{+}$. Consider $x^* \in E^*$ such that $\alpha := \inf_{x \in B_K}\, x^*(x) > 0$ and take $y^* \in E^* \setminus \{0\}$. Then,
    $$(x^* + \varepsilon y^*)(x) \geq \alpha + \varepsilon y^*(x) \geq \alpha - |\varepsilon| ||y^*||_* > 0 \quad \text{for all } x \in B_K \text{ and }|\varepsilon|  <  \alpha ||y^*||_*^{-1}.$$ Hence, we conclude
    $x^* \in {\rm cor}\, K^{+}$. 
    The remaining relations follow immediately from the definitions of $K^\#$ and $K^+$.

	\item[$2^\circ$] 
	Take some $x^* \in K^{w\#}$ (i.e., $x^*(x) > 0$ for all $x \in {\rm cl}_w\,B_K$)  and an arbitrary $y^* \in E^*$.
	We are going to show that 
	\begin{equation}
	    \label{eq:corK+CorProperty}
	    x^* + [0, \varepsilon] \cdot y^* \subseteq  K^{+} \quad \mbox{for some }\varepsilon > 0.
	\end{equation}
	Since ${\rm cl}_w\,B_K$ is weakly compact and $x^*$ and $y^*$ are continuous, there are $\delta, C > 0$ such that $x^*(x) \geq \delta > 0$ and $|y^*(x)| \leq C$ for all $x \in {\rm cl}_w\,B_K$. Define $\varepsilon := \frac{\delta}{C} \, (> 0)$. Then, for any $x \in {\rm cl}_w\,B_K \supseteq B_K$ and $\bar \varepsilon \in [0, \varepsilon]$, we get
	$$
	(x^* + \bar \varepsilon  y^*)(x) = x^*(x) + \bar \varepsilon  y^*(x) \geq \delta + \bar \varepsilon  y^*(x)  \geq \delta - \bar \varepsilon  C \geq 0. 
	$$
    We conclude that \eqref{eq:corK+CorProperty} is valid; hence $x^* \in {\rm cor}\, K^{+}$.
	
	\item[$3^\circ$] Since $K^{+} = ({\rm cl}_w\,K)^{+}$ we have ${\rm cor}\, K^{+} = {\rm cor}\,({\rm cl}_w\,K)^+$.
	Take some $x^* \in {\rm cor}\, ({\rm cl}_w\,K)^+$. On the contrary assume that $x^* \notin K^{w\#}$, hence there is $k \in {\rm cl}_w\,B_K$ with $x^*(k) \leq 0$. By our assumption $0 \notin {\rm cl}_w\,B_K$ we also get that $k \in {\rm cl}_w\,B_K \subseteq ({\rm cl}_w\, K) \setminus \{0\}$. Since the topological dual space $E^*$ separates elements in $E$, there is $y^* \in E^*$ with $y^*(k) < y^*(0) = 0$. Then, we conclude that
	$$
	(x^* + \varepsilon y^*)(k) = x^*(k) + \varepsilon y^*(k) < 0 \quad \mbox{ for all } \varepsilon > 0,
	$$
	a contradiction to $x^* \in{\rm cor}\,({\rm cl}_w\,K)^+$.
	
	\item[$4^\circ$] If $B_K$ is weakly compact, the conclusion follows by assertion $1^\circ$ and the fact that $K^{w\#} = K^\#$.
    Assume that $K$ is weakly closed and ${\rm cl}_w\,B_K$ is weakly compact with $0 \notin {\rm cl}_w\,B_K$. Then, assertions $2^\circ$ and $3^\circ$ yield ${\rm cor}\, K^{+} = K^{w\#}$. Since    
    $B_K \subseteq {\rm cl}_w\,B_K \subseteq ({\rm cl}_w\,K) \setminus \{0\} = K \setminus \{0\}$, we get
	$K \setminus \{0\} = \mathbb{P}\cdot B_K \subseteq \mathbb{P}\cdot {\rm cl}_w\,B_K \subseteq K \setminus \{0\}$, hence $K^{w\#} = K^\#$.
    \item[$5^\circ$] This assertion follows immediately by $4^\circ$, since ${\rm cl}_w\,B_{{\rm cl}({\rm conv}\, K)}$ is weakly compact as a weakly closed subset of the weakly compact unit ball $\mathbb{B}$. 
    
    \end{itemize}	
\end{proof}

\begin{remark} \label{rem:ResGRTZ}
Notice, for any nontrival cone $K \subseteq E$, we have
\begin{equation}
\label{eq:int=corK+}
    {\rm int}\, K^{+} = {\rm cor}\, K^{+}
\end{equation}
since the dual cone $K^{+}$ of $K$ is a closed and convex set in the real Banach space $E^*$ (taking into account that $E$ is a real normed space). Moreover, if $K$ is a nontrival, closed, convex cone in the real normed space $E$, then the following facts are known:
\begin{itemize}
\item[$\bullet$] $K^\#$ is nonempty if $E$ is separable and $K$ is pointed (Krein–Rutman theorem; see Göpfert et al. \cite[p. 66]{GoeRiaTamZal2023}, Jahn \cite[Th. 3.38]{Jahn2011}).
\item[$\bullet$] $K^{\#}$ represents the quasi-interior of the dual cone $K^{+}$ w.r.t. the weak$^*$ topology $\tau_{w^*}$ (see {Bo{\c{t}}}, Grad and Wanka \cite[Prop. 2.1.1]{BotGradWanka}).
Furthermore, $K^{\#}$ represents the quasi-interior of the dual cone $K^{+}$ w.r.t. the norm topology in $E^*$ (see Göpfert et al. \cite[Prop. 2.2.25]{GoeRiaTamZal2023}).
\item[$\bullet$] $K^{\#}$ represents the interior of the dual cone $K^{+}$ w.r.t. $\tau_{w^*}$ if this interior is nonempty. Moreover, $K^{\#} = {\rm int}\, K^{+}$ if $E$ is a reflexive Banach space and this interior is nonempty (see Göpfert et al. \cite[Prop. 2.2.25]{GoeRiaTamZal2023}, Jahn \cite[Lem. 3.21]{Jahn2011}) or if $B_K$ is weakly compact (see G\"unther, Khazayel and Tammer \cite[Th. 2.18]{GuenKhaTam22}).
\end{itemize}

In Theorem \ref{th:properties_top_dual_cones}, we characterize the algebraic interior ${\rm cor}\, K^+$  of $ K^+$ under the assumption that $K$ is a nontrivial cone in the real normed space $E$. Corresponding results are shown in \cite[Prop. 2.2.25]{GoeRiaTamZal2023}, see also the references in \cite[Section 2.2]{GoeRiaTamZal2023}. 
Taking into account \eqref{eq:K+=convK+=clvonvK+},  \eqref{eq:wellbasedIntK+neqEmpty} and \eqref{eq:int=corK+}, for any nontrivial cone $K \subseteq E$,
from \cite[Prop. 2.2.33]{GoeRiaTamZal2023} (see also \cite[Th. 3.6]{Chiang2012}) we obtain
\begin{equation} \label{eq:0NotInClSconvK}
{\rm cor}\, K^{+}\neq\emptyset \iff {\rm int}\, K^+ \neq \emptyset  \iff
{\rm cl}({\rm conv}\,K)\; \mbox{ is well-based } \Longleftrightarrow 0\notin {\rm cl}\,S_{{\rm cl}({\rm conv}\,K)},    
\end{equation}
while from \cite[Prop. 2.2.25]{GoeRiaTamZal2023}, 
if $E$ is also reflexive, we get 
\[{\rm int}\, K^+ \neq \emptyset \quad \Longrightarrow \quad {\rm cor}\, K^+ = {\rm int}\, K^+ =  ({\rm cl}({\rm conv}\,K))^\#. \]
In particular, assertions $4^\circ$ and $5^\circ$ in Theorem \ref{th:properties_top_dual_cones} are related to \cite[Prop. 2.2.25]{GoeRiaTamZal2023}, since 
$${\rm int}\, K^+ \neq \emptyset \iff  0\notin {\rm cl}\,S_{{\rm cl}({\rm conv}\,K)}  \Longrightarrow 0 \notin {\rm cl}_w\,B_{{\rm cl}({\rm conv}\, K)}$$ 
(see also the upcoming Theorem \ref{th:0notinclSK}). Notice that the assertions in \cite[Prop. 2.2.25, Cor. 2.2.30]{GoeRiaTamZal2023} are derived in a more general (Hausdorff locally convex space) setting.  
\end{remark}

\subsection{Classical Separation Theorems for Convex Sets / Cones} \label{sec:linear_separation}

We will employ classical separation arguments for convex sets in order to prove our main strict cone separation results in Section  \ref{sec:cone_separation_normed}. 

\begin{proposition}
	\label{prop:seperationCompactJahn2} Suppose that $E$ is a real normed space and $\Omega^1, \Omega^2 \subseteq E$ are nonempty, closed, convex sets, where one of the sets $\Omega^1$ and $\Omega^2$ is weakly compact. If $\Omega^1 \cap \Omega^2 = \emptyset$, then there is $x^* \in E^* \setminus \{0\}$ such that
     $$\sup_{\omega^1 \in \Omega^1}\, x^*(\omega^1)  < \inf_{\omega^2 \in \Omega^2}\, x^*(\omega^2).$$
\end{proposition}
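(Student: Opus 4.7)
The plan is to reduce the problem to a standard strict separation of $\{0\}$ from a closed convex set in the normed space $E$. Without loss of generality I assume that $\Omega^1$ is weakly compact (otherwise swap the roles and replace the separating functional by its negative at the end). Define
\[
\Omega := \Omega^2 - \Omega^1 = \{\omega^2 - \omega^1 \mid \omega^1 \in \Omega^1,\ \omega^2 \in \Omega^2\}.
\]
Then $\Omega$ is nonempty and convex (as the algebraic difference of two nonempty convex sets), and $\Omega^1 \cap \Omega^2 = \emptyset$ translates to $0 \notin \Omega$.

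The first key step is to show that $\Omega$ is closed. By Remark \ref{rem:StrongAndWeak3}, the closed convex set $\Omega^2$ is weakly closed, and $-\Omega^1$ is weakly compact. I would then prove that the Minkowski sum of a weakly closed set and a weakly compact set is weakly closed: pick any net $(\omega^2_\alpha - \omega^1_\alpha)$ in $\Omega^2 + (-\Omega^1)$ converging weakly to some $x \in E$; weak compactness of $-\Omega^1$ yields a weakly convergent subnet $-\omega^1_{\alpha_\beta} \rightharpoonup -\omega^1$ with $\omega^1 \in \Omega^1$; then $\omega^2_{\alpha_\beta} \rightharpoonup x + \omega^1$, and weak closedness of $\Omega^2$ gives $x + \omega^1 \in \Omega^2$, whence $x \in \Omega$. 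So $\Omega$ is weakly closed, and in particular (norm-)closed.

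Since $\Omega$ is closed and $0 \notin \Omega$, there exists $\rho > 0$ with $\rho\,\mathbb{B} \cap \Omega = \emptyset$. The open convex set $\rho\,{\rm int}\,\mathbb{B}$ and the convex set $\Omega$ are thus disjoint, so the classical geometric Hahn-Banach theorem (for convex sets with nonempty interior in a real normed space) yields some $x^* \in E^* \setminus \{0\}$ and $\gamma \in \mathbb{R}$ such that
\[
x^*(b) < \gamma \leq x^*(\omega) \quad \text{for all } b \in \rho\,{\rm int}\,\mathbb{B},\ \omega \in \Omega.
\]
Plugging in $b = 0$ yields $\gamma > 0$ (since $x^* \neq 0$ and $\rho\,{\rm int}\,\mathbb{B}$ is a symmetric neighborhood of $0$, one has $\sup_{b \in \rho\,{\rm int}\,\mathbb{B}} x^*(b) = \rho\,\|x^*\|_* > 0$, forcing $\gamma > 0$). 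Hence
\[
x^*(\omega^2) - x^*(\omega^1) \geq \gamma > 0 \quad \text{for all } \omega^1 \in \Omega^1,\ \omega^2 \in \Omega^2,
\]
which rearranges to $\sup_{\omega^1 \in \Omega^1} x^*(\omega^1) \leq \inf_{\omega^2 \in \Omega^2} x^*(\omega^2) - \gamma < \inf_{\omega^2 \in \Omega^2} x^*(\omega^2)$, as desired.

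The main obstacle is the weak closedness of $\Omega^2 + (-\Omega^1)$; without the weak compactness hypothesis on one of the sets, this sum need not even be norm-closed (a standard pathology already in Hilbert space), and then only non-strict separation would be available. The rest of the argument is routine convex analysis.
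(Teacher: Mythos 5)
Your proof is correct, but it takes a different route from the paper's. The paper disposes of this proposition in one line by invoking Rudin's strict separation theorem for a compact convex set and a disjoint closed convex set in a locally convex space, applied to $(E,\tau_w)$ — using that $(E,\|\cdot\|)^* = (E,\tau_w)^*$ and that closed convex sets are weakly closed (Remark \ref{rem:StrongAndWeak3}). You instead unfold that argument from scratch: you form the difference set $\Omega^2-\Omega^1$, prove via a net/subnet argument that the sum of a weakly compact set and a weakly closed set is weakly closed (hence norm-closed), and then separate the origin from this closed convex set by the elementary Eidelheit theorem, using the open ball $\rho\,{\rm int}\,\mathbb{B}$ as the open convex set. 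All steps check out, including the quantitative gap $\gamma \geq \rho\|x^*\|_* > 0$ that yields strictness, and the final rearrangement of $x^*(\omega^2)-x^*(\omega^1)\geq\gamma$ into the claimed strict inequality between $\sup$ and $\inf$. What the paper's citation buys is brevity; what your version buys is self-containedness and transparency about exactly where weak compactness enters (namely, the closedness of the difference set, which — as you rightly note — can fail without it, already in Hilbert space), at the cost of essentially re-proving the cited theorem in the normed setting.
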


\begin{proof}
The classical result for strict linear separation of two convex sets, as described in Rudin \cite[Th. 3.4 (b)]{Rudin1973}, can be applied to the space $(E, \tau_w)$, resulting in this proposition.
\end{proof}

For the separation of two convex cones by a hyperplane we have the following result (see also Jahn \cite[Th. 3.22]{Jahn2011}).

\begin{proposition} \label{prop:Jahn_sep_two_closed_cones}
	Suppose that $E$ is a real normed space, $K, A \subseteq E$ are closed, convex cones, and ${\rm cl}\,S_K$ is weakly compact with $0 \notin {\rm cl}\,S_K$. If $A \cap (-K) = \{0\}$, then there is $x^* \in E^* \setminus \{0\}$ such that 
		\begin{equation}
            \label{eq:sep_lin_cones}
		    x^*(a) \geq 0 > x^*(k) \quad \mbox{for all } a \in A \mbox{ and  }k \in -K \setminus \{0\}.
		\end{equation}
\end{proposition}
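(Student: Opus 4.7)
The plan is to reduce the statement to an application of Proposition \ref{prop:seperationCompactJahn2}. The natural weakly compact convex set to pair with $A$ is $\widetilde{B} := -{\rm cl}\,S_K$, which by hypothesis is weakly compact, convex, and avoids the origin; weak compactness implies weak closedness and hence norm closedness, so $\widetilde{B}$ meets the hypotheses required of it in Proposition \ref{prop:seperationCompactJahn2}. Since $K$ is closed and convex, $S_K = {\rm conv}(B_K) \subseteq K$, and passing to the closure yields $\widetilde{B} \subseteq -K \setminus \{0\}$. Together with the hypothesis $A \cap (-K) = \{0\}$, this forces $A \cap \widetilde{B} = \emptyset$.

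Applying Proposition \ref{prop:seperationCompactJahn2} to the closed convex set $A$ and the weakly compact convex set $\widetilde{B}$ then produces $x^* \in E^* \setminus \{0\}$ with
$$\sup_{b \in \widetilde{B}}\, x^*(b) < \inf_{a \in A}\, x^*(a).$$
I would next exploit the conic structure of $A$: since $0 \in A$ and $A = \mathbb{R}_+ \cdot A$, the infimum on the right is either $0$ (precisely when $x^* \in A^+$) or $-\infty$, and the latter is excluded because $\widetilde{B}$ is norm-bounded and hence $x^*$ is bounded on $\widetilde{B}$. Therefore $x^*(a) \geq 0$ for every $a \in A$, while simultaneously $x^*(b) < 0$ for every $b \in \widetilde{B}$.

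It remains to lift the strict inequality from $-B_K \subseteq \widetilde{B}$ to all of $-K \setminus \{0\}$, which is immediate from the conic structure: each $k \in -K \setminus \{0\}$ has the form $k = \|k\| \cdot (-b)$ with $b := -k/\|k\| \in B_K$, so $k$ is a positive scalar multiple of an element of $-B_K \subseteq \widetilde{B}$, yielding $x^*(k) < 0$. The main obstacle in this proof is not the separation itself but correctly identifying the auxiliary set $\widetilde{B}$ so that it simultaneously (i) captures all directions in $-K \setminus \{0\}$ up to positive scaling, (ii) remains weakly compact, and (iii) stays disjoint from $A$; this is precisely what the assumptions on ${\rm cl}\,S_K$ are tailored to guarantee, after which the argument becomes routine.
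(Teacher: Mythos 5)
Your proposal is correct and follows essentially the same route as the paper: both reduce the statement to an application of Proposition \ref{prop:seperationCompactJahn2} to the pair $A$ and $-{\rm cl}\,S_K$, using $0 \notin {\rm cl}\,S_K$ to ensure disjointness, and then recover \eqref{eq:sep_lin_cones} from the conic structure. The only difference is cosmetic: the paper invokes Remark \ref{rem:baseSK} to write $-K \setminus \{0\} = \mathbb{P} \cdot (-{\rm cl}\,S_K)$ and leaves the final step as ``easy to derive,'' whereas you spell out that step by scaling from $-B_K$ and arguing that $\inf_{a \in A} x^*(a)$ must equal $0$, which is a complete and slightly more self-contained version of the same argument.
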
 

\begin{proof} 
 Since $0 \notin {\rm cl}\,S_K$ we have $K = {\rm cl}({\rm conv}\,K) = \mathbb{R}_+ \cdot {\rm cl}\,S_K$ (by Remark \ref{rem:baseSK}), and so $-K \setminus \{0\} = \mathbb{P} \cdot (-{\rm cl}\,S_K)$. Thus, under the assumption $A \cap (-K \setminus \{0\}) = \emptyset$, it follows $A \cap (-{\rm cl}\,S_K) = \emptyset$. Since both sets $A$ and  $-{\rm cl}\,S_K$ are nonempty, closed and convex, $-{\rm cl}\,S_K$ is weakly compact, by Proposition \ref{prop:seperationCompactJahn2} one can strictly separate $A$ and $-{\rm cl}\,S_K$ by using $x^* \in E^* \setminus \{0\}$. From this, it is easy to derive \eqref{eq:sep_lin_cones}.     
\end{proof}

\section{Properties of Augmented Dual Cones} \label{sec:augmented_dual_cones}

Assume that $(E, ||\cdot||)$ is a real normed space  and $K \subseteq E$ is a nontrivial cone. 
Following the definition of so-called topological augmented dual cones by Kasimbeyli \cite{Kasimbeyli2010} (see also G\"unther, Khazayel and Tammer \cite[Sec. 3]{GuenKhaTam22} for some extensions), we define
$$
K^{a+} := \{(x^*, \alpha) \in K^+ \times \mathbb{R}_+ \mid \forall\, y \in K:\; x^*(y) - \alpha ||y|| \geq 0\},
$$
and its subset
\begin{align*}
K^{a\#} & := \{(x^*, \alpha) \in K^\# \times \mathbb{R}_+ \mid \forall\, y \in K \setminus \{0\}:\;  x^*(y) - \alpha ||y|| > 0\}.
\end{align*}

\begin{remark}  \label{rem:adCones_Bases}
Taking into account the fact that for $(x^*, \alpha) \in E^* \times \mathbb{R}_+$ we have $x^*(y) \geq x^*(y) - \alpha ||y||$ for all $y \in E$, we can also write $E^*$ (or $K^+$) instead of $K^+$ and $K^{\#}$, respectively, in the definitions of $K^{a+}$ and $K^{a\#}$.
It is easy to check that 
\begin{equation}
\label{eq:K+=convKa+=clvonvKa+}
    K^{a+} = ({\rm conv}\,K)^{a+} = ({\rm cl}({\rm conv}\,K))^{a+} \quad \text{and} \quad ({\rm cl}({\rm conv}\,K))^{a\#} \subseteq K^{a\#} = ({\rm conv}\,K)^{a\#}.
\end{equation}
Moreover, using the norm-base $B_K$ of $K$, we get
\begin{align*}
K^{a+} & = \{(x^*, \alpha) \in K^+ \times \mathbb{R}_+ \mid \forall\, y \in B_K:\; x^*(y) \geq  \alpha\},\\
K^{a\#} & = \{(x^*, \alpha) \in K^\# \times \mathbb{R}_+ \mid \forall\, y \in B_K:\;  x^*(y) > \alpha\}.
\end{align*}
\end{remark}

In our separation approach, the following subset of $K^{a\#}$ will be of special importance,
\begin{equation}
\label{eq:defKawSharp}
K^{aw\#} := \{(x^*, \alpha) \in K^\# \times \mathbb{R}_+ \mid \forall\, y \in {\rm cl}_w\,B_K:\;  x^*(y) > \alpha\}.
\end{equation}
\begin{remark}
Clearly, if $B_K$ is weakly closed, then $K^{a\#} = K^{aw\#}$. Moreover, for any $x^* \in E^*$, we have
$$
||x^*||_* \geq \sup_{x  \in B_K}\,  x^*(x) \geq \inf_{x  \in B_K}\,  x^*(x) = \inf_{x  \in {\rm cl}_w\, B_K}\,  x^*(x) = \inf_{x  \in S_K}\,  x^*(x) = \inf_{x  \in {\rm cl}\,S_K}\,  x^*(x) \geq -||x^*||_*.
$$
\end{remark}

Based on \cite[Lem. 3.7]{GuenKhaTam22}, the main properties of the topological augmented dual cone $K^{a+}$  and
its subsets  $K^{a\#}$ and $K^{aw\#}$ are as follows:

\begin{itemize}
	\item[$\bullet$] $K^\theta \times \{0\} = K^{a\theta} \cap (E^* \times \{0\}) \subseteq K^{a\theta}$ for all $\theta \in \{+, \#, w\#\}$.
	\item[$\bullet$] {Cone property:} $\mathbb{P} \cdot K^{a\theta} = K^{a\theta}$ for all $\theta \in \{+, \#, w\#\}$,
	and $(0,0) \in K^{a+}$ (i.e., $K^{a+}$ is a cone). 

	\item[$\bullet$] {Convexity:} $K^{a\theta}$ is convex (or equivalently, $K^{a\theta} + K^{a\theta} = K^{a\theta}$) for all $\theta \in \{+, \#, w\#\}$.
	
	\item[$\bullet$] {Pointedness:} $\ell(K^{a+}) = \ell(K^+) \times \{0\}$, hence $K^{a+}$ is pointed $\iff$ $K^+$ is pointed.
	Moreover, 
	$K^{a+} \setminus \ell(K^{a+}) = K^{a+} \cap (K^+ \setminus \ell(K^+) \times \mathbb{P})$.
		
	\item[$\bullet$] {Nontriviality:} $K^{a+}$ is nontrivial $\iff$ $K^+ \neq \{0\}$.\\
\end{itemize}

The following lemma provides conditions for the existence of elements in the sets $K^{a+} \cap (E^* \times \mathbb{P})$, $K^{aw\#}$ and $K^{aw\#} \cap (E^* \times \mathbb{P})$, respectively.

\begin{lemma} \label{lem:KaPlushasPosElement}
Let $E$ be a real normed space and $K$ a nontrivial cone with the norm-base $B_{K}$.  Then, the following assertions are valid:
	\begin{itemize} 
		\item[$1^\circ$] If ${\rm cl}_w\,B_K$ is weakly compact  (e.g. if $E$ is also reflexive), then $(x^*, c) \in K^{a+} \cap (E^* \times \mathbb{P})$ for $x^* \in K^{w\#}$ and $c := {\rm min}_{x \in {\rm cl}_w\,B_K}\, x^*(x)$.
        \item[$2^\circ$]
        If $K$ is weakly closed and ${\rm cl}_w\,B_K$ is weakly compact with $0 \notin {\rm cl}_w\,B_{K}$, then $(x^*, c) \in K^{a+} \cap (E^* \times \mathbb{P})$ for $x^* \in K^{\#}$ and $c := {\rm min}_{x \in {\rm cl}_w\,B_K}\, x^*(x)$.
		\item[$3^\circ$]
		If $(x^*, \alpha) \in  K^{a+} \cap (E^* \times \mathbb{P})$, then $(x^*, \alpha -\varepsilon) \in K^{aw\#}$ for all $\varepsilon \in (0, \alpha]$.
	\end{itemize}	
\end{lemma}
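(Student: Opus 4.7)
The plan is to dispatch the three assertions in order, with the second reducing to the first via Theorem~\ref{th:properties_top_dual_cones} and the third being an independent but structurally similar argument. The common thread throughout is that every $x^* \in E^*$ is weakly continuous, hence attains its extrema on weakly compact sets and has weakly closed sublevel sets; this is what allows quantities on $B_K$ and on ${\rm cl}_w\,B_K$ to be interchanged freely.

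For $1^\circ$, I would first use weak continuity of $x^*$ on the weakly compact set ${\rm cl}_w\,B_K$ to guarantee that $c := \min_{x \in {\rm cl}_w\,B_K}\,x^*(x)$ is actually attained; then $c > 0$, because $x^* \in K^{w\#}$ forces $x^*(x) > 0$ throughout ${\rm cl}_w\,B_K$, so in fact $c \in \mathbb{P}$. Invoking the reformulation of $K^{a+}$ in Remark~\ref{rem:adCones_Bases}, which characterises $(x^*,c) \in K^{a+}$ as $x^* \in K^+$ together with $x^*(y) \geq c$ for every $y \in B_K$, the second condition is immediate from the very definition of $c$ (using $B_K \subseteq {\rm cl}_w\,B_K$) while the first follows from the chain $K^{w\#} \subseteq K^\# \subseteq K^+$.

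For $2^\circ$, the hypotheses ($K$ weakly closed, ${\rm cl}_w\,B_K$ weakly compact, $0 \notin {\rm cl}_w\,B_K$) are exactly those of Theorem~\ref{th:properties_top_dual_cones}~$4^\circ$, which yields $K^\# = K^{w\#}$. Hence any $x^* \in K^\#$ lies in $K^{w\#}$ and the conclusion of $1^\circ$ applies verbatim.

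For $3^\circ$, I would reformulate $(x^*,\alpha) \in K^{a+}$ via Remark~\ref{rem:adCones_Bases} as $x^*(y) \geq \alpha$ for every $y \in B_K$. Since the halfspace $\{y \in E \mid x^*(y) \geq \alpha\}$ is weakly closed by weak continuity of $x^*$, it automatically contains the weak closure ${\rm cl}_w\,B_K$. For any $\varepsilon \in (0,\alpha]$ this gives $x^*(y) \geq \alpha > \alpha - \varepsilon$ on ${\rm cl}_w\,B_K$, with $\alpha - \varepsilon \in \mathbb{R}_+$ since $\varepsilon \leq \alpha$; the membership $x^* \in K^\#$ follows from $x^*(y) \geq \alpha > 0$ on $B_K$ together with the base decomposition $K\setminus\{0\} = \mathbb{P}\cdot B_K$, so $(x^*,\alpha - \varepsilon) \in K^{aw\#}$ by the definition \eqref{eq:defKawSharp}. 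I do not expect any genuine obstacle: the only conceptual ingredient is the weak-continuity trick that propagates the defining inequalities of $K^{a+}$ from $B_K$ to its weak closure, and once this observation is in place all three parts collapse to short verifications.
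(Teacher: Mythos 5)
Your proposal is correct and follows essentially the same route as the paper: part $1^\circ$ via attainment of the minimum on the weakly compact set ${\rm cl}_w\,B_K$, part $2^\circ$ by reduction to $1^\circ$ through Theorem~\ref{th:properties_top_dual_cones}~($4^\circ$), and part $3^\circ$ by propagating the inequality $x^*(y)\geq\alpha$ from $B_K$ to ${\rm cl}_w\,B_K$ (the paper passes through ${\rm cl}({\rm conv}(B_K))$, you use the weakly closed halfspace — the same weak-continuity observation). Your explicit verification that $x^*\in K^\#$ in $3^\circ$ is a detail the paper leaves implicit, but both arguments are complete.
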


\begin{proof}
    \begin{itemize} 
        	\item[$1^\circ$] Since  $x^* \in K^{w\#}$,  we infer $x^*(x) > 0$ for all $x \in {\rm cl}_w\,B_{K}$. By the continuity of $x^*$ and the weak compactness of ${\rm cl}_w\,B_{K}$, we conclude $c = {\rm min}_{x \in {\rm cl}_w\,B_{K}} x^*(x) > 0$. 
        	This shows that $(x^*, c) \in K^{a+} \cap (E^* \times \mathbb{P})$.
            \item[$2^\circ$] In view of Theorem \ref{th:properties_top_dual_cones} ($4^\circ$), if $K$ is weakly closed and ${\rm cl}_w\,B_K$ is weakly compact with $0 \notin {\rm cl}_w\,B_{K}$, then $K^\# = K^{w\#}$. Thus, the conclusion follows by assertion $1^\circ$. 
        	\item[$3^\circ$] Take some $(x^*, \alpha) \in  K^{a+}$ with $\alpha > 0$, hence $x^*(x) \geq \alpha > 0$ for all $x \in {\rm cl}({\rm conv}(B_K)) \supseteq {\rm cl}_w\,B_{K}$. Thus, for any $\varepsilon \in (0, \alpha]$, we have $x^*(x) > \alpha - \varepsilon \geq 0 $ for all $x \in {\rm cl}_w\,B_{K}$, and so $(x^*, \alpha -\varepsilon) \in  K^{aw\#}$.	
     \end{itemize}   	
\end{proof}

Next, we will study the nonemptyness of the sets $K^{a+} \cap (E^* \times \mathbb{P})$, $K^{a\#} \cap (E^* \times \mathbb{P})$, $K^{aw\#}$, and $K^{aw\#} \cap (E^* \times \mathbb{P})$, and we will relate these conditions to the condition $0 \notin {\rm cl}\,S_K$.

\begin{theorem} \label{th:0notinclSK}
Let $E$ be a real normed space, and let $K$ be a nontrivial cone with the norm-base $B_{K}$. 
Then, the following assertions hold:
\begin{itemize}
    \item[$1^\circ$] $0 \notin S_{K}  \Longrightarrow K$ is pointed.
    \item[$2^\circ$] 
    Assume that $K$ is convex.\\    
    a) $S_K^0 = K \cap \mathbb{B}$, and if $K$ is closed, then ${\rm cl}\,S_K^0 = K \cap \mathbb{B}$. \\
    b) $0 \in S_K \iff S_K = S_K^0$; and moreover, $0 \in {\rm cl}\,S_K \iff {\rm cl}\,S_K = {\rm cl}\,S_K^0$.
    \item[$3^\circ$] $
    K^{aw\#} \cap (E^* \times \mathbb{P})\neq \emptyset  \iff K^{a\#} \cap (E^* \times \mathbb{P})\neq \emptyset \iff K^{a+} \cap (E^* \times \mathbb{P}) \neq \emptyset \\
    \iff 
    {\rm cor}\, K^{+} \neq \emptyset \iff 
    {\rm int}\, K^{+} \neq \emptyset \\
    \iff 0 \notin {\rm cl}\,S_K$.
    \item[$4^\circ$] If ${\rm cl}_w\,B_K$ is weakly compact  (e.g. if $E$ is also reflexive), then \\
    $K^{w\#} \neq \emptyset 
    \iff  K^{aw\#}  \neq \emptyset \iff 0 \notin {\rm cl}\,S_K$.
    \end{itemize}
\end{theorem}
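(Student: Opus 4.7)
I plan to treat the four assertions in order, using $3^\circ$ as a backbone for $4^\circ$ and leveraging the characterizations in Theorem \ref{th:properties_top_dual_cones} and Remark \ref{rem:ResGRTZ}. For $1^\circ$, I argue the contrapositive: if $K$ is not pointed, I pick $k \in K \cap (-K)$ with $k \neq 0$, observe that $\pm k/||k||$ both lie in $B_K$, and average to get $0 \in {\rm conv}(B_K) = S_K$. For $2^\circ(a)$, convexity of $K$ and of $\mathbb{B}$ gives $S_K^0 \subseteq K \cap \mathbb{B}$ directly; conversely, for $x \in (K \cap \mathbb{B}) \setminus \{0\}$ I write $x = ||x|| \cdot (x/||x||) + (1-||x||) \cdot 0$, a convex combination of $x/||x|| \in B_K$ and $0$, and the closed-$K$ statement is immediate since $K \cap \mathbb{B}$ is then closed. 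Assertion $2^\circ(b)$ is formal: the chain $S_K \subseteq {\rm conv}(\{0\} \cup B_K) = S_K^0$ always holds, and the reverse inclusion reduces to $0 \in S_K$ by convexity of $S_K$; the closure version is identical, using that the closure of a convex set is convex.

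For $3^\circ$ I prove a cycle (A)$\Rightarrow$(B)$\Rightarrow$(C)$\Rightarrow$(D)$\Leftrightarrow$(E)$\Rightarrow$(F)$\Rightarrow$(A) through the six stated conditions in the order they appear. The first two arrows are the trivial containments $K^{aw\#} \subseteq K^{a\#} \subseteq K^{a+}$. For (C)$\Rightarrow$(D), any $(x^*,\alpha) \in K^{a+} \cap (E^* \times \mathbb{P})$ forces $\inf_{y \in B_K} x^*(y) \geq \alpha > 0$, so Theorem \ref{th:properties_top_dual_cones}($1^\circ$) delivers $x^* \in {\rm cor}\,K^+$. The equivalence (D)$\Leftrightarrow$(E) is \eqref{eq:int=corK+}, and (E)$\Rightarrow$(F) follows from \eqref{eq:0NotInClSconvK} combined with the easy inclusion ${\rm cl}\,S_K \subseteq {\rm cl}\,S_{{\rm cl}({\rm conv}\,K)}$ (which comes from $B_K \subseteq B_{{\rm cl}({\rm conv}\,K)}$). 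The crucial step is (F)$\Rightarrow$(A): given $0 \notin {\rm cl}\,S_K$, I apply Proposition \ref{prop:seperationCompactJahn2} to the (weakly) compact singleton $\{0\}$ and the closed convex set ${\rm cl}\,S_K$ to obtain $x^* \in E^*$ and $c > 0$ with $x^*(y) \geq c$ on ${\rm cl}\,S_K$. Convexity of $S_K$ combined with Remark \ref{rem:StrongAndWeak3} gives ${\rm cl}_w\,S_K = {\rm cl}\,S_K$, and $B_K \subseteq S_K$ forces ${\rm cl}_w\,B_K \subseteq {\rm cl}\,S_K$. Rescaling shows $x^*(k) \geq c||k|| > 0$ for every $k \in K \setminus \{0\}$, hence $x^* \in K^\#$, and consequently $(x^*, c/2) \in K^{aw\#} \cap (E^* \times \mathbb{P})$.

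For $4^\circ$ I deduce both equivalences from $3^\circ$ and Lemma \ref{lem:KaPlushasPosElement}. The implication $0 \notin {\rm cl}\,S_K \Rightarrow K^{aw\#} \neq \emptyset$ is immediate via $K^{aw\#} \cap (E^* \times \mathbb{P}) \subseteq K^{aw\#}$; conversely, any $(x^*,\alpha) \in K^{aw\#}$ gives $x^* \in K^{w\#}$ by definition, and the weak compactness hypothesis lets Lemma \ref{lem:KaPlushasPosElement}($1^\circ$) promote this to $(x^*, c) \in K^{a+} \cap (E^* \times \mathbb{P})$ with $c = \min_{y \in {\rm cl}_w\,B_K} x^*(y) > 0$, whence $3^\circ$ yields $0 \notin {\rm cl}\,S_K$. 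The same two parts of Lemma \ref{lem:KaPlushasPosElement} (parts $1^\circ$ and $3^\circ$) convert any element of $K^{w\#}$ into one of $K^{aw\#}$, closing $K^{w\#} \neq \emptyset \iff K^{aw\#} \neq \emptyset$. The only nonroutine step, I expect, is the separation argument in (F)$\Rightarrow$(A), where a functional supporting ${\rm cl}\,S_K$ away from $0$ must simultaneously separate $0$ from ${\rm cl}_w\,B_K$; the key fact enabling this is the identity ${\rm cl}_w\,S_K = {\rm cl}\,S_K$ for convex $S_K$.
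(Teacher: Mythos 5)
Your proposal is correct, and for assertions $1^\circ$, $2^\circ$ and $4^\circ$ it follows essentially the same route as the paper: the same averaging of $\pm k/||k||$ for $1^\circ$, the same case analysis on $\lambda = ||x||$ for $2^\circ$(a), the same formal argument for $2^\circ$(b), and the same use of Lemma \ref{lem:KaPlushasPosElement} ($1^\circ$ and $3^\circ$) together with $3^\circ$ to close the cycle in $4^\circ$. The genuine difference is in $3^\circ$. The paper imports the equivalence $K^{a\#} \cap (E^* \times \mathbb{P}) \neq \emptyset \iff K^{a+} \cap (E^* \times \mathbb{P}) \neq \emptyset \iff 0 \notin {\rm cl}\,S_K$ wholesale from \cite[Lem.\ 3.11\,(3)]{GuenKhaTam22} and then attaches the $K^{aw\#}$ condition via Lemma \ref{lem:KaPlushasPosElement} ($3^\circ$); you instead run a single self-contained cycle through all six conditions and prove the one nontrivial arrow, $0 \notin {\rm cl}\,S_K \Rightarrow K^{aw\#} \cap (E^* \times \mathbb{P}) \neq \emptyset$, directly by strictly separating the weakly compact singleton $\{0\}$ from the closed convex set ${\rm cl}\,S_K$ via Proposition \ref{prop:seperationCompactJahn2}, then using ${\rm cl}_w\,S_K = {\rm cl}\,S_K$ (convexity) to pass from $B_K$ to ${\rm cl}_w\,B_K$ and the rescaling $x^*(k) \geq c||k||$ to land in $K^\#$. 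That separation argument is sound (the functional produced is bounded below by $c>0$ on ${\rm cl}_w\,B_K \subseteq {\rm cl}\,S_K$, so $(x^*, c/2)$ does lie in $K^{aw\#} \cap (E^* \times \mathbb{P})$), and what it buys is independence from the external lemma: your proof of $3^\circ$ uses only tools already established in this paper. The paper's version is shorter but leans on the companion work; yours is marginally longer but self-contained, and it makes explicit that the same separation mechanism underlying the main Theorems \ref{th:sep_cones_Banach_1} and \ref{th:sep_cones_Banach_3} already drives the nonemptiness characterization.
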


\begin{proof}
\begin{itemize}
    \item[$1^\circ$]  Assuming that $K$ is not pointed, i.e., $0 \neq x \in \ell(K) = K \cap (-K)$, we find $0 \neq \bar x \in B_K \cap \ell(K)$, hence $-\bar x \in B_K \cap \ell(K)$. Consequently, $0 \in {\rm conv}(B_{K}) = S_{K}$. 

    \item[$2^\circ$]  a) The inclusion $S_{K}^0 \subseteq K \cap \mathbb{B}$ is obvious. Take $x \in K \cap \mathbb{B}$ and put $\lambda := ||x|| \;(\leq 1)$. If $\lambda \in \{0,1\}$, then $x \in B_{K} \cup \{0\} \subseteq S_{K}^0$; if $\lambda \in (0,1)$, then $x = (1-\lambda) \cdot 0 + \lambda \cdot x'$ for $0, x' := \lambda^{-1} x \in B_{K} \cup \{0\} \subseteq S_{K}^0$, hence $x \in S_{K}^0$ by the convexity of $S_{K}^0$.
    
    Since $S_K^0 = K \cap \mathbb{B}$ and $K$ is closed, we know that $K \cap \mathbb{B} = S_K^0 = {\rm cl}\,S_K^0$

    b) Obviously, $S_K \subseteq S_K^0$, and if $0 \in S_K$, then $B_K \cup \{0\} \subseteq S_K$, and so $S_K^0 \subseteq S_K$.  Analogously, ${\rm cl}\,S_K \subseteq {\rm cl}\,S_K^0$, and if $0 \in {\rm cl}\,S_K$, then $B_K \cup \{0\} \subseteq {\rm cl}\,S_K$, and so ${\rm cl}\,S_K^0 \subseteq {\rm cl}\,S_K$.
    
	\item[$3^\circ$] The equivalences $K^{a\#} \cap (E^* \times \mathbb{P})\neq \emptyset \iff K^{a+} \cap (E^* \times \mathbb{P}) \neq \emptyset \iff 0 \notin {\rm cl}\,S_K$ follow from  G\"unther, Khazayel and Tammer \cite[Lem. 3.11 (3)]{GuenKhaTam22}. By Lemma \ref{lem:KaPlushasPosElement} ($3^\circ$)  and the fact that $K^{aw\#} \subseteq  K^{a+}$, we conclude $K^{a+} \cap (E^* \times \mathbb{P}) \neq \emptyset \iff K^{aw\#} \cap (E^* \times \mathbb{P}) \neq \emptyset$. Furthermore, as observed in \eqref{eq:0NotInClSKIntK+NeqEmpty} in Remark \ref{rem:baseSK}, 
    $0 \notin {\rm cl}\,S_K \Longrightarrow {\rm int}\, K^{+} \neq \emptyset$, while \eqref{eq:int=corK+} and \eqref{eq:0NotInClSconvK} in Remark \ref{rem:ResGRTZ} show that
    ${\rm int}\, K^{+} \neq \emptyset
    \iff {\rm cor}\, K^{+} \neq \emptyset
    \iff 0 \notin {\rm cl}\,S_{{\rm cl}({\rm conv}\,K)} \Longrightarrow 0 \notin {\rm cl}\,S_K$.
	\item[$4^\circ$] 
	Obviously, $K^{aw\#} \cap (E^* \times \mathbb{P})\neq \emptyset\Longrightarrow K^{aw\#}  \neq \emptyset \Longrightarrow K^{w\#} \neq \emptyset$, while $K^{w\#}  \neq \emptyset \Longrightarrow K^{a+} \cap (E^* \times \mathbb{P})\neq \emptyset$ follows by Lemma \ref{lem:KaPlushasPosElement} ($1^\circ$).  The conclusion is reached by applying $3^\circ$.
\end{itemize}
\end{proof}

\begin{remark}
    Consider a nontrivial cone $K \subseteq E$ in the real normed space $E$. By Theorem \ref{th:0notinclSK} ($3^\circ$) we get some new insight in some results by Eichfelder and Kasimbeyli \cite[Th. 3.13 and Prop. 3.14]{EichfelderKasimbeyli2014} since
    $${\rm conv}\, K \text{ is well-based } \iff 0 \notin {\rm cl}\,S_K \iff K^{a+} \cap (E^* \times \mathbb{P})\neq \emptyset \iff K^{a\#} \cap (E^* \times \mathbb{P})\neq \emptyset$$
    and
    $$K^{a+} \cap (E^* \times \mathbb{P})\neq \emptyset \iff 0 \notin {\rm cl}\,S_K \iff {\rm cl}\,S_K \text{ is a closed, bounded, convex base for } {\rm cl}({\rm conv}\, K).$$
\end{remark}

The implication $K^{a\#}  \neq \emptyset \Longrightarrow 0 \notin {\rm cl}\,S_K$ is in general not true, as the next example with $(x^*, 0) \in K^{a\#}$ and $0 \in {\rm cl}\,S_K$ shows.

\begin{example} \label{ex:counter_example_0inclSK}
Consider the sequence space $E := \ell_2$ endowed with its usual norm $||\cdot||_{\ell_2}: \ell_2 \to \mathbb{R}$, defined by
$$
||x||_{\ell_2} := \sqrt{\sum_{i = 1}^\infty x_i^2} \quad \mbox{for all }x = (x_1, x_2, \ldots) \in \ell_2.
$$
Notice that ($\ell_2$, $||\cdot||_{\ell_2}$) is a Hilbert space (and so also a reflexive Banach space).
Moreover, consider the natural ordering cone in $\ell_2$ (which is a nontrivial, pointed, closed, convex cone),
\begin{equation}
\label{eq:standard_cone_l2}
    K := \{x = (x_1, x_2, \ldots) \in \ell_2 \mid \forall i \in \mathbb{N}: \; x_i \geq 0 \}.
\end{equation}
Then, $(x^*, 0) \in K^{a\#}$ with $x^* := (\frac{1}{n})_{n \in \mathbb{N}} \in K^\#$. Considering the well-known facts $K = K^+$ and ${\rm int}\, K = {\rm cor}\, K = \emptyset$ (cf. Jahn \cite[Ex. 1.48]{Jahn2011}), the condition $0 \in {\rm cl}\,S_K$ follows from Theorem \ref{th:0notinclSK} ($3^\circ$). In fact, it is easy to check that $0$ is a weakly sequential limit point of $B_K$, hence $0 \in {\rm cl}_w\, B_K \subseteq {\rm cl}_w\, S_K = {\rm cl}\, S_K$. Since $0 \in ({\rm cl}_w\, B_K) \setminus B_K$, the norm base $B_K$ is not weakly compact, unlike ${\rm cl}\, S_K$\; ($= S_K^0 = K \cap \mathbb{B}$ by Theorem \ref{th:0notinclSK} ($2^\circ$)), which is weakly compact.  
\end{example}

\begin{remark}
Taking into account Theorem \ref{th:0notinclSK} ($4^\circ$), our Example \ref{ex:counter_example_0inclSK} shows that the case ${\rm cor}\, K^{+}= K^{w\#} = \emptyset \neq K^{\#}$ may happen for a nontrivial, closed, pointed, convex cone $K$ in a real reflexive Banach space $E$. 
\end{remark}

Below is a simpler example (with $K^{a\#}  \neq \emptyset$ and $0 \in {\rm cl}\,S_K$) in any real normed space (of at least dimension two) with a nontrivial, convex cone $K$ that is not closed.

\begin{example} \label{ex:counter_example2_0inclSK}
For any $x^* \in E^* \setminus \{0\}$, we consider the nontrivial, convex cone 
$$K := \{0\} \cup \{x \in E \mid x^*(x) > 0\}.$$ 
Then, $K^{a\#} = \mathbb{P} \cdot \{(x^*, 0)\} \neq \emptyset$ and $B_K = \{x \in \mathbb{S} \mid x^*(x) > 0\}$. Moreover, if $E$ has dimension two or higher, then $S_K = \{x \in \mathbb{B} \mid x^*(x) > 0\}$ and $0 \in \{x \in \mathbb{B} \mid x^*(x) \geq 0\} = {\rm cl}\, S_K$.
\end{example}

In the next theorem, we derive useful representations of the algebraic interior of the topological augmented dual cone $K^{a+}$ (see also Theorem \ref{th:properties_top_dual_cones}).

\begin{theorem} \label{th:solidness_augmented_dual_cones}
	Assume that $E$ is a real normed space, and $K \subseteq E$ is a nontrivial cone with the norm-base $B_{K}$.
	Then, the following assertions hold:
    \begin{itemize}
    \item[$1^\circ$] 
        ${\rm cor}\, K^{a+} = \{(x^*, \alpha) \in K^+ \times \mathbb{P} \mid \inf_{x  \in B_K}\,  x^*(x) > \alpha \} \subseteq K^{a\#} \cap (E^*\times \mathbb{P})$.
    \item[$2^\circ$] 
        If ${\rm cl}_w\,B_K$ is weakly compact  (e.g. if $E$ is also reflexive), then\\
        a) ${\rm cor}\, K^{a+} = K^{aw\#} \cap (E^*\times \mathbb{P})$;\\ 
        b) ${\rm cor}\, K^{a+} \neq \emptyset \iff 0 \notin {\rm cl}\,S_K$.
    \item[$3^\circ$] 
        If $B_{K}$ is weakly compact, then ${\rm cor}\, K^{a+} =  K^{a\#} \cap (E^*\times \mathbb{P})$.	
    \end{itemize}
\end{theorem}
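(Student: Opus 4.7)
The approach is to work directly from the definition of algebraic interior together with the norm-base reformulation recalled in Remark \ref{rem:adCones_Bases}, namely $(y^*,\beta) \in K^{a+}$ if and only if $\beta \geq 0$ and $y^*(x) \geq \beta$ for every $x \in B_K$. All three assertions will then reduce to careful bookkeeping of inequalities on the base $B_K$ and (weakly) on ${\rm cl}_w\,B_K$.

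For $1^\circ$, set $M := \{(x^*,\alpha) \in K^+ \times \mathbb{P} \mid \inf_{x \in B_K} x^*(x) > \alpha\}$. To prove $M \subseteq {\rm cor}\, K^{a+}$, I fix $(x^*,\alpha) \in M$, write $\delta := \inf_{x \in B_K} x^*(x) - \alpha > 0$, take any direction $(v^*,\gamma) \in E^* \times \mathbb{R}$, and choose $\varepsilon := \min\{\alpha/(|\gamma|+1),\, \delta/(||v^*||_* + |\gamma|+1)\} > 0$; then for every $t \in [0,\varepsilon]$ one has $\alpha + t\gamma \geq 0$ and $(x^* + tv^*)(x) - (\alpha + t\gamma) \geq \delta - t(||v^*||_* + |\gamma|) \geq 0$ for all $x \in B_K$, which by positive homogeneity extends to $K$, yielding $(x^* + tv^*, \alpha + t\gamma) \in K^{a+}$. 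Conversely, for ${\rm cor}\, K^{a+} \subseteq M$, applying the ${\rm cor}$-property of $(x^*,\alpha)$ in the directions $(0,-1)$ and $(0,1)$ produces some $\varepsilon_0 > 0$ such that $\alpha \geq \varepsilon_0 > 0$ and $(x^*,\alpha + \varepsilon_0) \in K^{a+}$, hence $\inf_{x \in B_K} x^*(x) \geq \alpha + \varepsilon_0 > \alpha$. The inclusion $M \subseteq K^{a\#} \cap (E^* \times \mathbb{P})$ is immediate, since on $M$ one has $x^*(x) > \alpha > 0$ for $x \in B_K$, which propagates to $K \setminus \{0\}$ by positive homogeneity.

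For $2^\circ$(a), the inclusion $\subseteq$ uses that by $1^\circ$ any $(x^*,\alpha) \in {\rm cor}\, K^{a+}$ satisfies $\inf_{x \in B_K} x^*(x) > \alpha$; for any $y \in {\rm cl}_w\,B_K$ a net in $B_K$ converges weakly to $y$, so weak continuity of $x^*$ gives $x^*(y) \geq \inf_{x \in B_K} x^*(x) > \alpha$, placing $(x^*,\alpha)$ in $K^{aw\#} \cap (E^* \times \mathbb{P})$. For the reverse inclusion, weak compactness of ${\rm cl}_w\,B_K$ together with weak continuity of $x^*$ guarantees that the minimum of $x^*$ on ${\rm cl}_w\,B_K$ is attained, say at $y_0$; from $(x^*,\alpha) \in K^{aw\#}$ one has $x^*(y_0) > \alpha$, so $\inf_{x \in B_K} x^*(x) \geq x^*(y_0) > \alpha$ and $1^\circ$ returns $(x^*,\alpha) \in {\rm cor}\, K^{a+}$. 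Part $2^\circ$(b) is then immediate from (a) combined with Theorem \ref{th:0notinclSK} ($3^\circ$). For $3^\circ$, weak compactness of $B_K$ forces $B_K = {\rm cl}_w\,B_K$ (the weak topology on $E$ is Hausdorff since $E^*$ separates points), hence $K^{a\#} = K^{aw\#}$, and (a) yields the claim.

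The delicate point is the strict inequality $x^*(y) > \alpha$ required for membership in $K^{aw\#}$ in the reverse direction of $2^\circ$(a): passing to a weak limit a priori only preserves $\geq$, and it is precisely the (weak) compactness of ${\rm cl}_w\,B_K$ that allows one to attain the infimum and thereby upgrade to strictness. Everything else is essentially bookkeeping around the norm-base characterization of $K^{a+}$ and the choices of $\varepsilon$ in terms of $\delta$, $\alpha$, $||v^*||_*$ and $|\gamma|$.
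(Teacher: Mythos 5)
Your proposal is correct and follows essentially the same route as the paper's proof: the same $\varepsilon$-bookkeeping on $B_K$ for the two inclusions in $1^\circ$, and the same attainment-of-the-infimum argument (via weak compactness and weak continuity of $x^*$) to upgrade $\geq$ to $>$ in $2^\circ$ and $3^\circ$. The only cosmetic difference is that for $3^\circ$ you pass through $2^\circ$(a) using $B_K = {\rm cl}_w\,B_K$, whereas the paper deduces it directly from $1^\circ$; both are equally immediate.
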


\begin{proof}
\begin{itemize}
	\item[$1^\circ$]
    For $(x^*, \alpha) \in {\rm cor}\, K^{a+}$, there exists $\delta > 0$ such that $(x^*, \alpha \pm \delta) = (x^*, \alpha) \pm \delta (0,-1) \in K^{a+}$; hence $\alpha - \delta \geq 0$, since $(x^*, \alpha - \delta ) \in K^{a+}$, and $\inf_{x  \in B_K}\,  x^*(x) \geq \alpha + \delta > \alpha$, since $(x^*, \alpha + \delta) \in K^{a+}$.
 
    Conversely, take $(x^*, \alpha)$ with $\inf_{x  \in B_K}\,  x^*(x) > \alpha > 0$ and $(y^*, \beta) \in E^* \times \mathbb{R}$. Put $\gamma := 1 + ||y^*||_*$. Then, $|y^*(y)| \leq ||y^*||_* < \gamma$ for $y \in \mathbb{S} \supseteq B_K$. Define the positive number 
    $$\delta := (\gamma + |\beta|)^{-1} \min\left\{\alpha\; , \left(\inf_{x  \in B_K}\,  x^*(x)\right) - \alpha\right\}.$$
    Consequently, for $|\lambda| \leq \delta$ and $y \in B_K$, one has
    $$
    \alpha + \lambda \beta \geq \alpha - |\lambda \beta| \geq \alpha - |\lambda| (\gamma + |\beta|) \geq \alpha - \delta (\gamma + |\beta|) \geq 0
    $$
    and
    \begin{align*}
        x^*(y) + \lambda y^*(y) - (\alpha + \lambda \beta) & \geq  \left(\inf_{x  \in B_K}\,  x^*(x)\right) - \alpha - \gamma |\lambda| - |\lambda \beta| \\
        &\geq \left(\inf_{x  \in B_K}\,  x^*(x)\right) - \alpha - \delta (\gamma + |\beta|) \\
        & \geq 0,
    \end{align*}      
    whence $(x^*, \alpha) + \lambda (y^*, \beta) \in K^{a+}$. It follows that $(x^*, \alpha) \in {\rm cor}\, K^{a+}$.
    
    Finally, the inclusion ${\rm cor}\, K^{a+} \subseteq K^{a\#} \cap (E^*\times \mathbb{P})$ is obvious.
  
    \item[$2^\circ$] The equality in a) follows by $1^\circ$, since $\inf_{x  \in {\rm cl}_w\, B_K} x^*(x)$ ($= \inf_{x  \in B_K} x^*(x)$) is attained, and thus the equivalence in b) follows by Theorem \ref{th:0notinclSK} ($3^\circ$).
 
    \item[$3^\circ$] This assertion is an immediate consequence of $1^\circ$, since $\inf_{x  \in B_K} x^*(x)$ is attained. 
    \end{itemize}
\end{proof}

We conclude the Section \ref{sec:augmented_dual_cones} by applying our results given in Theorems \ref{th:properties_top_dual_cones} and \ref{th:0notinclSK}  to the class of Bishop-Phelps cones, which will play an important role in our Sections \ref{sec:nonSepAppAndBPCones} and \ref{sec:cone_separation_normed}.

\begin{example}
    Assume that $(E, ||\cdot||)$ is a real reflexive Banach space. For a given $y^* \in E^*$ with $||y^*||_* > 1$, consider
    the Bishop-Phelps cone 
    $$
    C := C(y^*) = \{ x \in E \mid y^*(x) \geq ||x||\}
    $$
    and its norm-base 
    $$
    B_C = \{x \in C \mid ||x|| = 1\} = \{ x \in \mathbb{S} \mid y^*(x) \geq 1\}.
    $$
    Define $D := \{ x \in \mathbb{B} \mid y^*(x) \geq 1\}$. It is easy to verify that $S_C \subseteq D$ (taking into account that $D$ is a convex set), and
    that $S_C = D$ if $E$ has dimension two or higher.
    Additionally, since $D$ is closed, we conclude that
    $$
    {\rm cl}_w\,B_C \subseteq {\rm cl}\,S_C \subseteq D.
    $$
    Now, observe that $0 \notin D$, hence $0 \notin {\rm cl}\,S_C$ and so $0 \notin {\rm cl}_w\,B_C$. By Theorem \ref{th:properties_top_dual_cones} ($5^\circ$) and Theorem \ref{th:0notinclSK} ($4^\circ$) we conclude that 
    $${\rm int}\, C^{+} = {\rm cor}\, C^{+} = C^{w\#} = C^{\#} \neq \emptyset.$$
\end{example}

\section{Nonlinear Separation Approach based on Bishop-Phelps Cones} \label{sec:nonSepAppAndBPCones}

In this section, we illustrate our nonlinear separation approach based on Bishop-Phelps cones. We begin by recalling some useful properties of the separating function $\varphi_{x^*, \alpha} : E \to \mathbb{R}$ with $(x^*, \alpha) \in E^* \times \mathbb{R}_+$, which we introduced in \eqref{f12} through
\begin{equation*}
\varphi_{x^*, \alpha}(y) = x^*(y) + \alpha ||y|| \quad \mbox{for all } y \in E.
\end{equation*}	
The following properties of $\varphi_{x^*, \alpha}$ are valid:
\begin{itemize}
	\item[$\bullet$] $\varphi_{x^*, \alpha}$ is continuous and sublinear (hence convex).
	\item[$\bullet$] The sublevel set w.r.t. the level 0 of $\varphi_{x^*, \alpha}$, namely the set
	$$C_{x^*, \alpha}^{\leq} := \{x \in E \mid \varphi_{x^*, \alpha}(x) \leq 0\},$$
	is a closed, convex cone. If $\alpha > 0$, then $C_{x^*, \alpha}^{\leq}$ is pointed, and if $||x^*||_* > \alpha$, then $C_{x^*, \alpha}^{\leq}$ is nontrivial and topologically solid.
    
	\item[$\bullet$]The strict sublevel set w.r.t. the level 0 of $\varphi_{x^*, \alpha}$, namely the set
	$$C_{x^*, \alpha}^{<} := \{x \in E \mid \varphi_{x^*, \alpha}(x) < 0\},$$
	is nonempty if and only if $||x^*||_* > \alpha$. Moreover, if $C_{x^*, \alpha}^{<}$ is nonempty  (e.g. if $(-x^*, \alpha) \in K^{a\#}$ for a nontrivial cone $K \subseteq E$), then 
	$
	{\rm int}\, C_{x^*, \alpha}^{\leq} = C_{x^*, \alpha}^{<} = \mathbb{P} \cdot C_{x^*, \alpha}^{<} = C_{x^*, \alpha}^{<} + C_{x^*, \alpha}^{<}.
	$
\end{itemize}

\begin{remark}
An interesting observation (see also Jahn \cite{Jahn2022, Jahn2023}) is that $C_{x^*, \alpha}^{\leq}$ (with $\alpha > 0$)
is actually a Bishop-Phelps cone $C(-\alpha^{-1} x^*)$, i.e.,
$$C_{x^*, \alpha}^{\leq} = \{x \in E \mid \varphi_{x^*, \alpha}(x) = x^*(x) + \alpha ||x|| \leq 0\} = C_{\alpha^{-1} x^*, 1}^{\leq} = - C(\alpha^{-1} x^*) = C(-\alpha^{-1} x^*).$$
Consequently, the above mentioned properties of $C_{x^*, \alpha}^{\leq}$ and $C_{x^*, \alpha}^{<}$ follow directly from the properties of $C(-\alpha^{-1} x^*)$ and $C_>(-\alpha^{-1} x^*)$.    
\end{remark}

Assume that $K$ and $A$ are nontrivial cones in the real normed space $E$. As mentioned in the Introduction, we will focus in our upcoming Section \ref{sec:cone_separation_normed} on the separation of two (not necessarily convex) cones by a (convex) cone $C \subseteq E$. We state conditions such that the cones $-K$ and $A$ are strictly separated by a convex cone of Bishop-Phelps type. More precisely, we like to find $(x^*, \alpha) \in K^{a\#} \cap (E^* \times \mathbb{P})$ such that $-K$ and $A$ are strictly separated by $C = C_{x^*, \alpha}^{\leq}$, which means in terms of functional analysis,
\begin{align}
	x^*(a) + \alpha ||a|| &  >  0 > x^*(k) + \alpha ||k||   \quad \mbox{for all } a \in A\setminus \{0\} \mbox{ and  }k \in -K \setminus \{0\}.  \label{sep_non_top_4}
\end{align}	

\begin{remark}
    In our paper, we follow the approach (by Kasimbeyli \cite{Kasimbeyli2010}) to separate $-K$ and $A$ instead of $K$ and $A$, and to consider a pair $(x^*, \alpha) \in K^{a+}$. The separation approach for $-K$ and $A$ is known to be useful in applications from the fields of vector optimization and order theory (see, e.g., Jahn \cite{Jahn2011} and Kasimbeyli \cite{Kasimbeyli2010}).
\end{remark}

According to our Introduction, the two cones to be separated (namely $-K$ and $A$) do not play symmetric roles, since the separating cone $C_{x^*, \alpha}^{\leq}$ for $\alpha > 0$ is a pointed, closed, convex, dilating cone for $-K$.  In our separation approach, the conditions
\begin{equation}
\label{eq:nec_cond_separation_cones}
    A \cap {\rm cl}({\rm conv}(-K)) = \{0\}
\end{equation}
and
\begin{equation}
\label{eq:nec_cond_separation_cones_2}
    {\rm cl}({\rm conv}\,K) \text{ is pointed}
\end{equation}
are necessary for the strict separation of $-K$ and $A$ by $C_{x^*, \alpha}^{\leq}$. Moreover, the convex sets $S_{-K} = {\rm conv}(-B_{K}) = {\rm conv}(B_{-K})$ and $S_A^0 = {\rm conv}(B_{A} \cup \{0\})$ for the norm-bases  $B_{K}$ and $B_{A}$ of $K$ and $A$, as well as the condition 
\begin{equation}
\label{eq:-clSAcapclSK=empty2}
({\rm cl}\,S_A^0) \cap  ({\rm cl}\, S_{-K}) = \emptyset
\end{equation}
will be of interest in our approach. In particular, \eqref{eq:-clSAcapclSK=empty2} ensures  \eqref{eq:nec_cond_separation_cones}, \eqref{eq:nec_cond_separation_cones_2} and  
\begin{equation}
\label{eq:-clSAcapclSK=emptyConsequence}
0 \notin {\rm cl}\, S_{-K}.
\end{equation}
Indeed, 
$$\eqref{eq:-clSAcapclSK=emptyConsequence} \quad \Longleftrightarrow \quad 0 \notin {\rm cl}\, S_{{\rm cl}({\rm conv}\,K)} \quad\Longrightarrow \quad \eqref{eq:nec_cond_separation_cones_2}$$ by \eqref{eq:0NotInClSconvK} and Theorem \ref{th:0notinclSK} ($1^\circ$, $3^\circ$), while \eqref{eq:-clSAcapclSK=empty2} implies \eqref{eq:-clSAcapclSK=emptyConsequence} and \eqref{eq:nec_cond_separation_cones}, i e.,
\begin{equation}
\label{eq:necessary_sep_cond}
     A \cap {\rm cl}({\rm conv}(-K)) = \{0\}\quad \text{and}\quad 0 \notin {\rm cl}\, S_{-K},
 \end{equation}
since 
\begin{align*}
\eqref{eq:-clSAcapclSK=empty2} \quad 
\Longrightarrow \quad &  {\rm conv}(B_A \cup \{0\}) \cap  ({\rm cl}\, S_{-K}) = \emptyset\\
\Longrightarrow \quad &  (\mathbb{R}_+\cdot B_A) \cap  ({\rm cl}\, S_{-K}) = \emptyset\\
\Longleftrightarrow \quad &  (\mathbb{R}_+\cdot B_A) \cap  (\mathbb{R}_+ \cdot{\rm cl}\, S_{-K}) = \{0\} \quad \text{and} \quad 0 \notin {\rm cl}\, S_{-K}\\
\Longleftrightarrow \quad & \eqref{eq:necessary_sep_cond}.
\end{align*}
From the last two equivalences we can also deduce that \eqref{eq:necessary_sep_cond}
is equivalent to 
\begin{equation}
\label{eq:necessary_sep_cond2}
A \cap  ({\rm cl}\, S_{-K}) = \emptyset.
\end{equation}
Of course, if $A$ is closed and convex, then \eqref{eq:necessary_sep_cond2} $\Longrightarrow$ \eqref{eq:-clSAcapclSK=empty2} since ${\rm cl}\,S_A^0 \subseteq A$, and so \eqref{eq:necessary_sep_cond2} $ \Longleftrightarrow $ \eqref{eq:necessary_sep_cond}  $ \Longleftrightarrow $ \eqref{eq:-clSAcapclSK=empty2}. 
The validity of the implication \eqref{eq:necessary_sep_cond} $\Longrightarrow$ \eqref{eq:-clSAcapclSK=empty2} is studied in more detail in Remark \ref{rem:SepNecCondtions}, Example \ref{ex:counterex_AcapK1} and Theorem \ref{th:separationBanachSpacePureConvex}.
In view of Theorem \ref{th:0notinclSK} ($2^\circ$), under the closedness and convexity of $K$ and $A$, and the validity of the contrary statement of \eqref{eq:-clSAcapclSK=emptyConsequence} (i.e., $0 \in {\rm cl}\, S_{-K}$), we have 
\begin{equation} \label{eq:newEqRelatedToClSetsIntersection}
    ({\rm cl}\,S_A^0) \cap  ({\rm cl}\, S_{-K}) = A \cap  (-K) \cap \mathbb{B} \supseteq \{0\},
\end{equation}
where the last inclusion is an equality if \eqref{eq:nec_cond_separation_cones} is valid.

In the next section, we will prove (under the assumption that ${\rm cl}\,S_A^0$ or ${\rm cl}\, S_{-K}$ is weakly compact) that there is some  
$(x^*, \alpha) \in K^{aw\#}\cap (E^* \times \mathbb{P})$ such that $-K$ and $A$ are strictly separated by $C = C_{x^*, \alpha}^{\leq}$ if (and only if) \eqref{eq:-clSAcapclSK=empty2} is satisfied (see Theorems \ref{th:sep_cones_Banach_1} and \ref{th:sep_cones_Banach_3}).
Figure \ref{fig:cone_separation} visualizes the strict nonlinear separation approach for an example in the real normed space $(\mathbb{R}^2, ||\cdot||_2)$, where $||\cdot||_2$ denotes the Euclidean norm.
\begin{center}
    \begin{figure}[h!]
    \centering
    \resizebox{0.6\hsize}{!}{\input{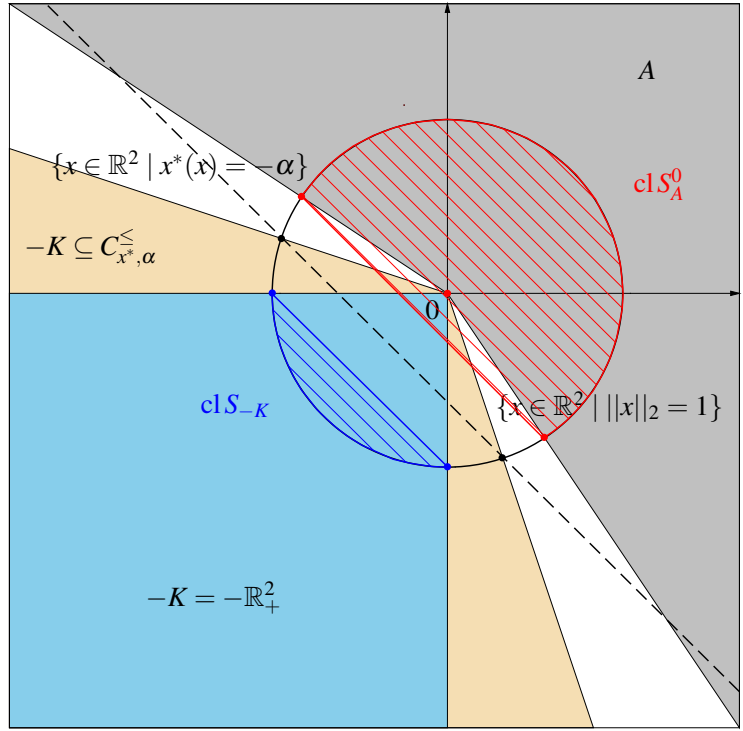}}
    \caption{The nontrivial, closed, pointed, solid, convex cone $-K = \mathbb{R}^2_+$ and the nontrivial, closed, solid, nonconvex cone $A$ in the real normed space $(\mathbb{R}^2, ||\cdot||_2)$ are strictly separated by $C_{x^*, \alpha}^{\leq}$. In particular, the condition \eqref{eq:-clSAcapclSK=empty2} is valid.}
    \label{fig:cone_separation}
\end{figure}
\end{center}

Some more details about the nonlinear cone separation approach can also be found in the paper \cite{GuenKhaTam22}.

\begin{remark}[related to the work \cite{GuenKhaTam22}]
 We would like to point out that a general nonlinear and nonsymmetric separation approach in real topological linear spaces / real locally convex spaces based on a generalization of the function $\varphi_{x^*, \alpha}$ in \eqref{f12} (a combination of a seminorm with a linear function) is developed in G\"unther, Khazayel and Tammer \cite{GuenKhaTam22}. In the present paper, we focus on the particular setting of real (reflexive) normed spaces. However, our main results in Section \ref{sec:cone_separation_normed} can not be derived directly from the results obtained in the more general framework in \cite{GuenKhaTam22} (see Remarks \ref{rem:GKT_general_1},  \ref{rem:GKT_general_2},  \ref{rem:GKT_general_3} and  \ref{rem:GKT_general_4} for details). A major problem is that norm functions defined on an infinite dimensional real normed space are not weakly continuous (as mentioned in Remark \ref{rem:StrongAndWeak3}).
For the  proofs of our results, we use the special structure in real (reflexive) normed spaces (e.g., weak compactness of the unit ball $\mathbb{B}$ in a reflexive Banach space setting, a special linear separation theorem in real normed spaces (Proposition \ref{prop:seperationCompactJahn2}), and the subset $K^{aw\#}$ given by \eqref{eq:defKawSharp} of the augmented dual cone $K^{a+}$). We do not make any compactness assumptions w.r.t. the norm topology.
\end{remark}

\section{Strict Cone Separation Theorems in Real (Reflexive) Normed Spaces} \label{sec:cone_separation_normed}

The aim of this section is to present new strict cone separation theorems in a real (reflexive) normed space $(E, ||\cdot||)$. 
We start with an auxiliary lemma that relates the nonlinear separation of two cones by the functions given in \eqref{f12} with the linear separation of the two norm-bases of the cones. In our new approach to separation, we are using the subset $K^{aw\#}$ defined by \eqref{eq:defKawSharp} of the augmented dual cone $K^{a+}$.

\begin{lemma} \label{lem:nonlinSep_linSep}Let $E$ be a real normed space, $K \subseteq E$ a nontrivial cone with the norm-base $B_{K}$, and $A \subseteq E$ a nontrivial cone with the norm-base $B_{A}$. 
    Then, the following assertions are equivalent:
\begin{itemize}
\item[$1^\circ$] $(x^*, \alpha) \in K^{aw\#}$ satisfies \eqref{sep_non_top_4} (i.e., $-K$ and $A$ are strictly separated by $C = C_{x^*, \alpha}^{\leq}$).
\item[$2^\circ$]  $(x^*, \alpha) \in E^* \times \mathbb{R}_+$ satisfies
    \begin{equation}
    	x^*(a) >  -\alpha > x^*(k)  \quad \mbox{for all } a \in B_A \mbox{ and  }k \in -{\rm cl}_w\, B_K.  \label{sep_non_top_4b}
    \end{equation}	
\end{itemize}
\end{lemma}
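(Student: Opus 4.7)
The plan is to prove the equivalence of $1^\circ$ and $2^\circ$ by the straightforward device of homogenizing/dehomogenizing the separation inequalities along rays, using that the norm-bases consist precisely of unit-norm representatives of each direction in $K$ and $A$.

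For the direction $1^\circ \Rightarrow 2^\circ$, I would take an arbitrary $a \in B_A$. Since $0 \notin B_A$ (as $\|0\|=0$), we have $a \in A \setminus \{0\}$ and $\|a\|=1$, so the first inequality of \eqref{sep_non_top_4} evaluated at $a$ reads $x^*(a) + \alpha > 0$, i.e., $x^*(a) > -\alpha$. For an arbitrary $k \in -{\rm cl}_w\, B_K$, we have $-k \in {\rm cl}_w\, B_K$, and since $(x^*,\alpha) \in K^{aw\#}$, the defining condition in \eqref{eq:defKawSharp} gives $x^*(-k) > \alpha$, that is $x^*(k) < -\alpha$. Combining these inequalities yields \eqref{sep_non_top_4b}.

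For the converse direction $2^\circ \Rightarrow 1^\circ$, I would first verify the membership $(x^*,\alpha) \in K^{aw\#}$. The inequality $x^*(y) > \alpha$ for every $y \in {\rm cl}_w\, B_K$ follows directly from \eqref{sep_non_top_4b} applied to $-y \in -{\rm cl}_w\, B_K$. To check $x^* \in K^\#$, take $k \in K \setminus \{0\}$ and write $k = \|k\| \cdot (k/\|k\|)$, where $k/\|k\| \in B_K \subseteq {\rm cl}_w\, B_K$; then $x^*(k/\|k\|) > \alpha \geq 0$, so $x^*(k) > 0$. Next, I would verify \eqref{sep_non_top_4}. For $a \in A \setminus \{0\}$, the element $a/\|a\|$ lies in $B_A$, so by \eqref{sep_non_top_4b}, $x^*(a/\|a\|) > -\alpha$, and multiplying by the positive number $\|a\|$ gives $x^*(a) + \alpha \|a\| > 0$. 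Symmetrically, for $k \in -K \setminus \{0\}$, the element $k/\|k\|$ lies in $-B_K \subseteq -{\rm cl}_w\, B_K$, so \eqref{sep_non_top_4b} yields $x^*(k/\|k\|) < -\alpha$, and multiplying by $\|k\| > 0$ gives $x^*(k) + \alpha \|k\| < 0$.

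There is no real obstacle here; the statement is essentially a reformulation, and the only slightly non-trivial point is the verification of $x^* \in K^\#$ in the second direction (which is needed because the definition \eqref{eq:defKawSharp} packages this into $K^{aw\#}$), and the observation that $B_K \subseteq {\rm cl}_w\, B_K$ provides a bridge between conditions on the norm-base and conditions on its weak closure. Everything else is the standard passage between a conic inequality $\varphi_{x^*,\alpha}(\cdot) \lessgtr 0$ and its evaluation on a base via positive homogeneity of both $x^*$ and $\|\cdot\|$.
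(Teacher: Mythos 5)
Your proposal is correct and follows essentially the same route as the paper: both arguments use positive homogeneity of $x^*$ and $\|\cdot\|$ to pass between the inequalities on $A\setminus\{0\}$, $-K\setminus\{0\}$ and their evaluations on the unit-norm bases, and both use the inclusion $B_K \subseteq {\rm cl}_w\,B_K$ to absorb the base condition into the condition on the weak closure. The only presentational difference is that the paper writes a single chain of equivalences while you argue the two implications separately, with your explicit check that $x^* \in K^\#$ making precise a point the paper leaves implicit.
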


\begin{proof}
Consider any $(x^*, \alpha) \in E^* \times \mathbb{R}_+$. It is easy to check that 
\eqref{sep_non_top_4} is equivalent to
\begin{equation}
    \label{sep_non_top_4c}
    x^*(a) + \alpha ||a||  >  0 > x^*(k) + \alpha ||k||   \quad \mbox{for all } a \in B_A \mbox{ and  }k \in - B_K.  
\end{equation}
Furthermore, we have
\begin{align*}
    &  \eqref{sep_non_top_4} \text{ is valid}   \quad \text{and} \quad(x^*, \alpha) \in K^{aw\#} \\
    \iff \quad& \eqref{sep_non_top_4c} \text{ is valid}  \quad \text{and} \quad x^*(-\bar k) > \alpha  \quad   \mbox{ for all } \bar k \in -{\rm cl}_w\, B_K\\
    \iff \quad & x^*(a) + \alpha  >  0 > x^*(k) + \alpha \quad \text{and} \quad  -\alpha > x^*(\bar k) \quad \mbox{ for all } a \in B_A, k \in -B_K, \bar k \in -{\rm cl}_w\, B_K\\
    \iff \quad& x^*(a)  >  - \alpha  > x^*(k) \quad \text{and} \quad  -\alpha > x^*(\bar k) \quad \mbox{ for all } a \in B_A, k \in - B_K, \bar k \in -{\rm cl}_w\, B_K\\
    \iff \quad& x^*(a)  >  - \alpha  > x^*(\bar k) \quad \mbox{ for all } a \in B_A, \bar k \in -{\rm cl}_w\, B_K.
\end{align*}
\end{proof}

\begin{remark}
    Notice that the (nonnegative) sign of the parameter $\alpha$ is very important in our nonsymmetric separation approach. Assertion $2^\circ$ of Lemma \ref{lem:nonlinSep_linSep} without the assumption $\alpha \geq 0$ would provide separating cones that are dilating cones for either $-K$ or $A$, which corresponds to a symmetric separation concept. Within our nonsymmetric separation concept, the separating cone is always a dilating cone for the cone $-K$.

    Notice that condition \eqref{sep_non_top_4b} for some $(x^*, \alpha) \in E^* \times \mathbb{R}_+$ does not imply the (strict linear cone separation) condition  \eqref{eq:sep_lin_cones} 
    in general (a counter example for the case $K$ is convex and $A$ is nonconvex is given in Figure \ref{fig:cone_separation}; see also Theorem \ref{th:separationBanachSpacePureConvex}). 
\end{remark}

In the following theorem, we present sufficient conditions for strict cone separation.

\begin{theorem} \label{th:sep_cones_Banach_1}
	Let $E$ be a real normed space, $K \subseteq E$ a nontrivial cone with the norm-base $B_{K}$, and $A \subseteq E$ a nontrivial cone with the norm-base $B_{A}$.  Assume that one of the sets ${\rm cl}\, S_{-K}$ and ${\rm cl}\, S_A^0$ is weakly compact  (e.g. if $E$ is also reflexive). If \eqref{eq:-clSAcapclSK=empty2} is valid, 
	then there exists $(x^*, \alpha) \in K^{aw\#} \cap (E^* \times \mathbb{P})$ such that \eqref{sep_non_top_4} is valid (i.e., $-K$ and $A$ are strictly separated by $C = C_{x^*, \alpha}^{\leq}$).
\end{theorem}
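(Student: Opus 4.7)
The plan is to reduce the desired nonlinear separation to a classical linear separation via Lemma \ref{lem:nonlinSep_linSep}, and to apply Proposition \ref{prop:seperationCompactJahn2} to the two convex sets ${\rm cl}\,S_A^0$ and ${\rm cl}\,S_{-K}$, which by hypothesis are nonempty (both contain the zero vector or nonzero norm-base elements), closed, convex, disjoint (by \eqref{eq:-clSAcapclSK=empty2}), with one of them weakly compact. This yields $x^* \in E^* \setminus \{0\}$ such that
\[
s := \sup_{y \in {\rm cl}\,S_{-K}}\, x^*(y) \;<\; \inf_{y \in {\rm cl}\,S_A^0}\, x^*(y) =: t.
\]

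Next, since $0 \in S_A^0 \subseteq {\rm cl}\,S_A^0$, one has $t \leq x^*(0) = 0$, so $s < t \leq 0$ and in particular $0 \leq -t < -s$. I would therefore choose any $\alpha \in (\max\{0,-t\}, -s)$, which is a nonempty open interval, so that $\alpha \in \mathbb{P}$ and $s < -\alpha < t$.

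The crux is then to verify the linear-separation condition \eqref{sep_non_top_4b}, from which the theorem follows by Lemma \ref{lem:nonlinSep_linSep}. For $a \in B_A$, the inclusion $B_A \subseteq S_A^0 \subseteq {\rm cl}\,S_A^0$ gives $x^*(a) \geq t > -\alpha$. For the other inequality, I need $-{\rm cl}_w\,B_K \subseteq {\rm cl}\,S_{-K}$: since ${\rm cl}\,S_{-K}$ is a closed convex set, by Remark \ref{rem:StrongAndWeak3} it is weakly closed, and since $-B_K \subseteq S_{-K} \subseteq {\rm cl}\,S_{-K}$ we get
\[
-{\rm cl}_w\,B_K = {\rm cl}_w(-B_K) \subseteq {\rm cl}_w({\rm cl}\,S_{-K}) = {\rm cl}\,S_{-K}.
\]
Hence $x^*(k) \leq s < -\alpha$ for every $k \in -{\rm cl}_w\,B_K$, which together with the previous inequality is exactly condition $2^\circ$ of Lemma \ref{lem:nonlinSep_linSep}. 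Applying the equivalence $2^\circ \Rightarrow 1^\circ$ of that lemma, we conclude that $(x^*,\alpha) \in K^{aw\#}$ and that \eqref{sep_non_top_4} holds; together with $\alpha > 0$ this yields $(x^*,\alpha) \in K^{aw\#} \cap (E^* \times \mathbb{P})$ as required.

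The only genuinely delicate point is the bookkeeping that forces $\alpha > 0$: one must use that $0$ actually lies in $S_A^0$ (this is precisely why the set $S_A^0 = {\rm conv}(\{0\}\cup B_A)$, rather than $S_A$, appears in the hypothesis \eqref{eq:-clSAcapclSK=empty2}), which pins down $t \leq 0$ and forces the separation constant to be strictly negative. Without the $\{0\}$ inside $S_A^0$, one could only ensure $\alpha \geq 0$, which would be insufficient for pointedness and solidness of the separating Bishop–Phelps cone.
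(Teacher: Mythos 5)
Your proof is correct and follows essentially the same route as the paper's: strictly separate the closed convex sets ${\rm cl}\,S_A^0$ and ${\rm cl}\,S_{-K}$ via Proposition \ref{prop:seperationCompactJahn2}, use $0 \in S_A^0$ to force the separation threshold to be negative (hence $\alpha > 0$), check $-{\rm cl}_w\,B_K \subseteq {\rm cl}\,S_{-K}$, and conclude with Lemma \ref{lem:nonlinSep_linSep} ($2^\circ \Rightarrow 1^\circ$). The only difference is cosmetic: you spell out the weak-closedness argument for $-{\rm cl}_w\,B_K \subseteq {\rm cl}\,S_{-K}$, which the paper takes as known.
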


\begin{proof}
Assume that \eqref{eq:-clSAcapclSK=empty2} holds true.	
Notice that in a reflexive Banach space setting, both sets ${\rm cl}\, S_{-K}$ and ${\rm cl}\, S_A^0$ are weakly compact as closed, convex subsets of the weakly compact unit ball $\mathbb{B}$.
Employing Proposition \ref{prop:seperationCompactJahn2} (the strict linear separation of convex sets in real normed spaces), there exist $x^* \in E^* \setminus \{0\}$ and $\gamma, \beta \in \mathbb{R}$ with
\begin{equation*}
\label{eq:linSepConSetSAandCLSKrefBan}
x^*(a) \geq \beta > \gamma \geq x^*(k)   \quad \mbox{for all } a \in {\rm cl}\,S_A^0 \mbox{ and }k \in {\rm cl}\,S_{-K}.
\end{equation*}
We obtain $\beta \leq 0$ since $0 \in S_A^0$. Fix some $\delta \in (\gamma, \beta) \subseteq (-\infty, 0)$ and put $\alpha := - \delta \; (> 0)$. Since $- {\rm cl}_w\, B_K \subseteq {\rm cl}\,S_{-K}$ we get
\begin{equation*}
\label{eq:linSepConSetSAandCLSKrefBan2}
x^*(a) \geq \beta > -\alpha > \gamma \geq x^*(k)   \quad \mbox{for all } a \in B_A  \mbox{ and }k \in - {\rm cl}_w\, B_K.
\end{equation*}
Lemma \ref{lem:nonlinSep_linSep} (implication $2^\circ \Longrightarrow 1^\circ$) yields the desired strict separation \eqref{sep_non_top_4} and $(x^*, \alpha) \in K^{aw\#}$.
\end{proof}

\begin{remark}[related to the work \cite{GuenKhaTam22}] \label{rem:GKT_general_1}
     Under the assumptions of Theorem \ref{th:sep_cones_Banach_1}, the existence of a pair $(x^*, \alpha) \in K^{a\#} \cap (E^* \times \mathbb{P})$ with 
     \eqref{sep_non_top_4} can also be ensured by applying \cite[Th. 5.9]{GuenKhaTam22} for the real locally convex space $(E, \tau_w)$ taking into account that $(E, ||\cdot||)^* = (E, \tau_w)^*$ , ${\rm cl}_w\,S_A^0 = {\rm cl}\,S_A^0$ and ${\rm cl}_w\,S_{-K} = {\rm cl}\,S_{-K}$. Our above proof of Theorem \ref{th:sep_cones_Banach_1} shows that actually one can ensure that $(x^*, \alpha) \in K^{aw\#} \cap (E^* \times \mathbb{P})$.
\end{remark}

Now, we are able to present our main strict cone separation result. 

\begin{theorem} \label{th:sep_cones_Banach_3}
Let $E$ be a real normed space, $K \subseteq E$ a nontrivial cone with the norm-base $B_{K}$, and $A \subseteq E$ a nontrivial cone with the norm-base $B_{A}$. Assume that ${\rm cl}\,S_{-K}$ is weakly compact (e.g. if $E$ is also reflexive).
Then, the following conditions are equivalent:
	\begin{itemize}
		\item[$1^\circ$] 
		$
		({\rm cl}\,S_A^0) \cap ({\rm cl}\, S_{-K}) = \emptyset.
		$
		\item[$2^\circ$]  There exists $(x^*, \alpha) \in K^{aw\#} \cap (E^* \times \mathbb{P})$  such that \eqref{sep_non_top_4} is valid.
		\item[$3^\circ$]  There exists $(x^*, \alpha) \in K^{aw\#}$ such that  \eqref{sep_non_top_4} is valid.
	\end{itemize}	
\end{theorem}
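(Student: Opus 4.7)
The plan is to establish the equivalences via the circular chain $1^\circ \Longrightarrow 2^\circ \Longrightarrow 3^\circ \Longrightarrow 1^\circ$. The implication $1^\circ \Longrightarrow 2^\circ$ is an immediate application of Theorem \ref{th:sep_cones_Banach_1}, since by hypothesis ${\rm cl}\,S_{-K}$ is weakly compact, so the required separator $(x^*,\alpha) \in K^{aw\#}\cap(E^*\times\mathbb{P})$ is produced directly. The implication $2^\circ \Longrightarrow 3^\circ$ is trivial.

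The real content is $3^\circ \Longrightarrow 1^\circ$. Given $(x^*,\alpha)\in K^{aw\#}$ satisfying \eqref{sep_non_top_4}, I first invoke Lemma \ref{lem:nonlinSep_linSep} to translate the nonlinear separation into its linear form \eqref{sep_non_top_4b}, namely $x^*(a) > -\alpha > x^*(k)$ for all $a \in B_A$ and $k \in -{\rm cl}_w\,B_K$. Next I note that ${\rm cl}_w\,B_K$ is weakly compact: since $S_K$ is convex we have ${\rm cl}_w\,S_K = {\rm cl}\,S_K$, and ${\rm cl}\,S_K = -{\rm cl}\,S_{-K}$ is weakly compact by hypothesis; then ${\rm cl}_w\,B_K$ is weakly closed in this weakly compact set, hence itself weakly compact. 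Because $x^*$ is weakly continuous, it attains its supremum on $-{\rm cl}_w\,B_K$, so there exists $\beta \in \mathbb{R}$ with $\beta < -\alpha$ and $x^*(k)\leq\beta$ for all $k \in -{\rm cl}_w\,B_K$.

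Now I propagate these bounds to the closures of the two convex sets. For $z \in {\rm cl}\,S_{-K}$, any approximating sequence $k_n = \sum_i \lambda_{n,i}k_{n,i} \in S_{-K}$ (with $k_{n,i}\in -B_K$, $\sum_i \lambda_{n,i}=1$) satisfies $x^*(k_n)\leq \beta$; passing to the limit yields $x^*(z)\leq\beta<-\alpha$. For $z \in {\rm cl}\,S_A^0$, an approximating sequence has the form $a_n = \sum_i \lambda_{n,i}a_{n,i}+\mu_n\cdot 0$ with $a_{n,i}\in B_A$, $\lambda_{n,i},\mu_n\geq 0$, $\sum_i\lambda_{n,i}+\mu_n=1$. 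Using $x^*(a_{n,i}) > -\alpha$ together with $\alpha\geq 0$ and $\sum_i\lambda_{n,i}\leq 1$, I obtain $x^*(a_n) \geq -\alpha\sum_i\lambda_{n,i}\geq -\alpha$, and so in the limit $x^*(z)\geq -\alpha$. Therefore $x^*$ separates the two closed convex sets strictly at the level $-\alpha$ versus $\beta$, forcing $({\rm cl}\,S_A^0)\cap({\rm cl}\,S_{-K}) = \emptyset$.

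The main obstacle I anticipate is that the bound obtained on ${\rm cl}\,S_A^0$ is only $\geq -\alpha$ (not $>$) because $0 \in S_A^0$ and convex combinations may put arbitrarily large mass on $0$; the argument only closes because one can upgrade the bound on ${\rm cl}\,S_{-K}$ to a strict inequality $\leq \beta < -\alpha$, and this upgrade rests precisely on the weak compactness of ${\rm cl}_w\,B_K$ (derived from the hypothesis on ${\rm cl}\,S_{-K}$). Without this weak compactness, one would only obtain $x^*(z)\leq -\alpha$ on ${\rm cl}\,S_{-K}$, which together with $x^*(z)\geq -\alpha$ on ${\rm cl}\,S_A^0$ would be insufficient to conclude disjointness.
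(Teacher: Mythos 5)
Your proposal is correct and follows essentially the same route as the paper: $1^\circ \Rightarrow 2^\circ$ via Theorem \ref{th:sep_cones_Banach_1}, and for $3^\circ \Rightarrow 1^\circ$ the passage through Lemma \ref{lem:nonlinSep_linSep}, the attainment of the supremum of $x^*$ on the weakly compact set $-{\rm cl}_w\,B_K$ to get a strict gap $\beta < -\alpha$, and the propagation of the bounds to ${\rm cl}\,S_A^0$ and ${\rm cl}\,S_{-K}$ (which the paper phrases as closedness and convexity of half spaces, and you spell out with explicit convex combinations). Your closing remark correctly pinpoints why the weak compactness hypothesis is what makes the strict gap, and hence the disjointness conclusion, possible.
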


\begin{proof} 
The implication $1^\circ \Longrightarrow 2^\circ$ is given in Theorem \ref{th:sep_cones_Banach_1}. 
Obviously, $2^\circ \Longrightarrow 3^\circ$ is valid.

It remains to show the implication $3^\circ \Longrightarrow 1^\circ$. Assume that $3^\circ$ is valid. By Lemma \ref{lem:nonlinSep_linSep} (implication $1^\circ \Longrightarrow 2^\circ$), there is $(x^*, \alpha) \in E^* \times \mathbb{R}_+$ such that 
\eqref{sep_non_top_4b} is valid. 
Since $x^*$ is continuous and ${\rm cl}_w\, B_K \, (\subseteq {\rm cl}\,S_{K})$ is weakly compact, there is $\beta < 0$ such that
$$
x^*(a) \geq  -\alpha  > \beta \geq x^*(k) \quad \mbox{for all } a \in B_A \cup \{0\}\mbox{ and  }k \in -B_{K}
$$
by the well-known Weierstraß theorem (see \cite[Th. 3.26]{Jahn2011}). 
By the convexity of closed half spaces and the continuity of $x^*$, we get 
$$
x^*(a)   \geq  -\alpha > \beta \geq x^*(k) \quad \mbox{for all }  a \in {\rm cl}\, S_A^0\mbox{ and  }k \in {\rm cl}\, S_{-K}.$$
As a direct consequence we conclude that
$({\rm cl}\, S_A^0) \cap ({\rm cl}\, S_{-K}) = \emptyset$.
\end{proof}

\begin{remark}
    Observe that because of $1^\circ \iff 2^\circ$ in Theorem \ref{th:sep_cones_Banach_3} the condition $({\rm cl}\,S_A^0) \cap ({\rm cl}\, S_{-K}) = \emptyset$  characterizes the existence of a strict cone separation of the (not necessarily convex) cones $-K$ and $A$ by a separating cone that is given by a Bishop-Phelps cone (which is a nontrivial, closed, pointed, topologically solid, convex cone). 
    In assertion $3^\circ$ of Theorem \ref{th:sep_cones_Banach_3}, the separating cone may also be given by a halfspace cone (if $\alpha = 0$), which is not a Bishop-Phelps type cone. 
    
    Due to the nonsymmetric separation approach,  the condition
    $({\rm cl}\,S_A^0) \cap ({\rm cl}\, S_{-K}) \neq ({\rm cl}\,S_{-K}^0) \cap ({\rm cl}\, S_{A})$
    may happen (see the example given in Figure \ref{fig:cone_separation} with $({\rm cl}\,S_A^0) \cap ({\rm cl}\, S_{-K}) = \emptyset \neq ({\rm cl}\,S_{-K}^0) \cap ({\rm cl}\, S_{A})$). 
    
    Under the assumptions of Theorem \ref{th:sep_cones_Banach_3}, any of the assertions $1^\circ, 2^\circ$ and $3^\circ$ implies the condition \eqref{eq:nec_cond_separation_cones_2}, and so $K$ is necessarily a pointed cone.
\end{remark}

\begin{remark}[related to the work \cite{GuenKhaTam22}] \label{rem:GKT_general_2}
    It is important to mention that, under the assumptions of Theorem \ref{th:sep_cones_Banach_3}, the implication $3^\circ \Longrightarrow 1^\circ$ (respectively, $2^\circ \Longrightarrow 1^\circ$) is only a consequence of \cite[Th. 5.12]{GuenKhaTam22} (applied for the real locally convex space $(E, \tau_w)$) in the case that $B_K$ is weakly compact (e.g. if $||\cdot||$ is weakly continuous (or equivalently, $E$ has finite dimension) and $K$ is closed). Notice, in this case $(E, ||\cdot||)^* = (E, \tau_w)^*$, ${\rm cl}_w\,S_A^0 = {\rm cl}\,S_A^0$, ${\rm cl}_w\,S_{-K} = {\rm cl}\,S_{-K}$, and $0 \notin {\rm cl}\,S_{-K}$ (by Theorem \ref{th:0notinclSK} ($4^\circ$)).     
\end{remark}

\begin{remark}[related to the work \cite{Kasimbeyli2010}] \label{rem:new_fourth_assertion}
In the spirit of Kasimbeyli \cite[Th. 4.3]{Kasimbeyli2010}, one could consider an assertion  
\begin{equation}
\label{ass_four}
    \text{``There exists $(x^*, \alpha) \in K^{a\#}$ such that  \eqref{sep_non_top_4} is valid.''}
\end{equation}
(notice that here $\alpha = 0$ may hold).
However, under the assumptions of Theorem \ref{th:sep_cones_Banach_3} and a not weakly compact norm-base $B_K$, this assertion \eqref{ass_four} does not imply the assertion $1^\circ$ of Theorem \ref{th:sep_cones_Banach_3} in general (even under the convexity and closedness of both cones $K$ and $A$, and the reflexivity of $E$), as seen in the next Example \ref{ex:counter_example_for_Kasimbeyli_theorem}. Clearly, if $B_{K}$ is weakly compact, then all the assertions in Theorem \ref{th:sep_cones_Banach_3} are equivalent to \eqref{ass_four} (since $K^{aw\#} = K^{a\#}$).
\end{remark}

\begin{example} \label{ex:counter_example_for_Kasimbeyli_theorem}
Consider again Example \ref{ex:counter_example_0inclSK} with the natural ordering cone $K$ given by \eqref{eq:standard_cone_l2} in $\ell_2$ (endowed with the inner product $\langle \cdot, \cdot \rangle$) and the sequence 
$x^* = (\frac{1}{n})_{n \in \mathbb{N}}$. Moreover, we consider the (ray) cone given by
$$
A := \mathbb{R}_+ \cdot \{x^*\} \quad \mbox{ with the norm-base } \quad B_A = \left\{\frac{x^*}{||x^*||_{\ell_2}}\right\}.
$$
Notice that $A$ is nontrivial, pointed, closed and convex, and $A \setminus \{0\} = \mathbb{P} \cdot \{x^*\}$.
As already observed in Example \ref{ex:counter_example_0inclSK}, $0 \in {\rm cl}_w\, B_K \subseteq {\rm cl}\, S_K$ and so, $0 \in -{\rm cl}\, S_K = {\rm cl}\, S_{-K}$. Since $K$ and $A$ are closed and convex with $A \cap (-K) = \{0\}$, we conclude (by the discussion related to \eqref{eq:newEqRelatedToClSetsIntersection})
$$
({\rm cl}\,S_A^0) \cap ({\rm cl}\, S_{-K}) = A \cap (-K) \cap \mathbb{B} = \{0\} \neq \emptyset.
$$
Thus, $1^\circ$ of Theorem \ref{th:sep_cones_Banach_3} is false in this example. Of course, by Theorem \ref{th:sep_cones_Banach_3} both assertions $2^\circ$ and $3^\circ$ are false too. Actually by Theorem \ref{th:0notinclSK} ($3^\circ$), we have
$$
K^{a+} \cap (E^* \times \mathbb{P}) = K^{a\#} \cap (E^* \times \mathbb{P}) = K^{aw\#} \cap (E^* \times \mathbb{P}) =  K^{aw\#} = \emptyset.
$$
However, the assertion \eqref{ass_four} is true. Indeed, for $(x^*, 0) \in K^{a\#}$, we get 
\begin{align*}
     \langle x^*, s x^* \rangle =  s ||x^*||_{\ell_2}^2 > 0 > \langle x^*, k \rangle  \quad \mbox{ for all }  s \in \mathbb{P} \text{ and } k \in -K \setminus \{0\},
\end{align*}
which shows that \eqref{sep_non_top_4} is valid.
\end{example}

\begin{lemma} \label{lem:new_lemma_strict_sep_conditions}
Let $E$ be a real normed space, $K \subseteq E$ a nontrivial cone with the norm-base $B_{K}$, and $A \subseteq E$ a nontrivial cone with the norm-base $B_{A}$. 
Then, the following assertions are equivalent:
   \begin{itemize}
		\item[$1^\circ$]  There exists $(x^*, \alpha) \in {\rm cor}\,K^{a+}$ such that \eqref{sep_non_top_4} is valid.
        \item[$2^\circ$]  There exists $(x^*, \alpha) \in {\rm cor}\,K^{a+}$ such that \eqref{sep_non_top_4} with ${\rm cl}({\rm conv}\,K)$ in the role of $K$ and ${\rm cl}\, A$ in the role of $A$ is valid.  
        \item[$3^\circ$]  There exist $\delta_2 > \delta_1 > 0$ and $x^* \in E^*$ such that, for any $\alpha \in (\delta_1, \delta_2)$, we have $(x^*, \alpha) \in {\rm cor}\,K^{a+}$, and \eqref{sep_non_top_4} with ${\rm cl}({\rm conv}\,K)$ in the role of $K$ and ${\rm cl}\, A$ in the role of $A$ is valid.  
        \item[$4^\circ$] There exist $\delta_2 > \delta_1 > 0$ and $x^* \in E^*$ such that, for any $\alpha \in (\delta_1, \delta_2)$, we have $(x^*, \alpha) \in K^{a\#}$, and \eqref{sep_non_top_4} with ${\rm cl}({\rm conv}\,K)$ in the role of $K$ and ${\rm cl}\, A$ in the role of $A$ is valid.  
   \end{itemize}  
\end{lemma}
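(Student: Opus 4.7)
The plan is to prove the chain $1^\circ \Longrightarrow 2^\circ \Longrightarrow 3^\circ \Longrightarrow 4^\circ \Longrightarrow 1^\circ$. The key workhorse throughout is Theorem~\ref{th:solidness_augmented_dual_cones}~($1^\circ$), which identifies
\[
{\rm cor}\,K^{a+} = \{(x^*,\alpha) \in K^+ \times \mathbb{P} \mid \inf_{b \in B_K} x^*(b) > \alpha\} \subseteq K^{a\#} \cap (E^* \times \mathbb{P}).
\]
In particular, ${\rm cor}\,K^{a+} \neq \emptyset$ gives $0 \notin {\rm cl}\,S_K$ via Theorem~\ref{th:0notinclSK}~($3^\circ$), and hence ${\rm cl}({\rm conv}\,K) = \mathbb{R}_+ \cdot {\rm cl}\,S_K$ by Remark~\ref{rem:baseSK}. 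This last identity is what lets me bootstrap from $K$ to ${\rm cl}({\rm conv}\,K)$ for the $K$-side of the separation inequality.

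For $1^\circ \Longrightarrow 2^\circ$, given $(x^*,\alpha) \in {\rm cor}\,K^{a+}$ satisfying \eqref{sep_non_top_4}, I set $M := \inf_{b \in B_K} x^*(b) > \alpha$ and fix any $\alpha' \in (\alpha, M)$. By the characterization above, $(x^*,\alpha') \in {\rm cor}\,K^{a+}$. For the $K$-side with ${\rm cl}({\rm conv}\,K)$, any nonzero $y \in {\rm cl}({\rm conv}\,K)$ can be written $y = \lambda z$ with $\lambda > 0$ and $z \in {\rm cl}\,S_K \subseteq \mathbb{B}$, and a convex-combination/continuity argument yields $x^*(z) \geq M > \alpha' \geq \alpha' \|z\|$, whence $x^*(y) > \alpha'\|y\|$ by positive homogeneity. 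For the $A$-side with ${\rm cl}\,A$, the continuous function $x^* + \alpha\|\cdot\|$ yields $x^*(a)+\alpha\|a\|\geq 0$ for every $a \in {\rm cl}\,A$ (approximate by nonzero elements of $A$), and then $x^*(a) + \alpha'\|a\| \geq (\alpha'-\alpha)\|a\| > 0$ for $a \in ({\rm cl}\,A)\setminus\{0\}$. This is the only genuinely nontrivial step, and it is precisely where the core condition is used: the upward perturbation from $\alpha$ to $\alpha'$ consumes the closed-set slack on the $A$-side while still leaving room on the $K$-side.

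For $2^\circ \Longrightarrow 3^\circ$, I take the same $(x^*,\alpha)$ from $2^\circ$ and set $\delta_1 := \alpha$ and $\delta_2 := M = \inf_{b \in B_K} x^*(b)$. For any $\alpha' \in (\delta_1,\delta_2)$, the core membership follows as above; the $K$-side extends to ${\rm cl}({\rm conv}\,K)$ exactly as in the previous paragraph; and on the $A$-side, the inequality $x^*(a) + \alpha'\|a\| > x^*(a) + \alpha\|a\| > 0$ for $a \in ({\rm cl}\,A)\setminus\{0\}$ is automatic since $\alpha' > \alpha$ and $\|a\| > 0$.

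The remaining implications are essentially bookkeeping. For $3^\circ \Longrightarrow 4^\circ$, the inclusion ${\rm cor}\,K^{a+} \subseteq K^{a\#}$ from Theorem~\ref{th:solidness_augmented_dual_cones}~($1^\circ$) gives the conclusion immediately. For $4^\circ \Longrightarrow 1^\circ$, the hypothesis $(x^*,\alpha) \in K^{a\#}$ for every $\alpha$ in the open interval $(\delta_1,\delta_2)$ forces $\inf_{b \in B_K} x^*(b) \geq \delta_2$ (let $\alpha \to \delta_2^-$ in $x^*(b) > \alpha$), so any fixed $\alpha \in (\delta_1,\delta_2)$ gives $\inf_{b \in B_K} x^*(b) \geq \delta_2 > \alpha > 0$, placing $(x^*,\alpha)$ in ${\rm cor}\,K^{a+}$; and \eqref{sep_non_top_4} with the closed cones implies the same condition with the original $K$ and $A$ since $K \subseteq {\rm cl}({\rm conv}\,K)$ and $A \subseteq {\rm cl}\,A$. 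The main obstacle is the closing-up step in $1^\circ \Longrightarrow 2^\circ$, since continuity alone only yields a weak inequality on ${\rm cl}\,A$; the strict inequality is salvaged by trading a small piece of the $K$-side slack for strictness on the $A$-side, which is exactly what membership in ${\rm cor}\,K^{a+}$ permits.
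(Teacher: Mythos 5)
Your proof is correct and follows essentially the same route as the paper: both arguments hinge on the characterization ${\rm cor}\,K^{a+}=\{(x^*,\alpha)\in K^+\times\mathbb{P}\mid \inf_{x\in B_K}x^*(x)>\alpha\}$ from Theorem~\ref{th:solidness_augmented_dual_cones}~($1^\circ$) and on trading the slack between $\alpha$ and $\inf_{x\in B_K}x^*(x)$ to turn the weak inequalities obtained on ${\rm cl}\,A$ and ${\rm cl}({\rm conv}\,K)$ back into strict ones. The only cosmetic difference is that you arrange the implications as a single cycle and extend the $K$-side inequality to ${\rm cl}({\rm conv}\,K)$ via $0\notin{\rm cl}\,S_K$ and the representation ${\rm cl}({\rm conv}\,K)=\mathbb{R}_+\cdot{\rm cl}\,S_K$, whereas the paper proves $1^\circ\Rightarrow 3^\circ$ directly and gets the same extension more quickly from the closedness and convexity of the Bishop--Phelps sublevel set $C_{-x^*,\beta}^{\leq}\supseteq K$.
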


\begin{proof} 
Obviously, $3^\circ \Longrightarrow 2^\circ \Longrightarrow 1^\circ$, and $3^\circ \Longrightarrow 4^\circ$ (by Theorem  \ref{th:solidness_augmented_dual_cones} ($1^\circ$)).

Let us show the implication $4^\circ \Longrightarrow 3^\circ$. Assume that $4^\circ$ is valid. For any $\delta \in (\alpha, \delta_2)$, we have
$x^*(k) > \delta > \alpha$ for $k \in B_K$, and so $\inf_{x  \in B_K}\,  x^*(x) > \alpha$. By Theorem  \ref{th:solidness_augmented_dual_cones} ($1^\circ$) we conclude $(x^*, \alpha) \in {\rm cor}\,K^{a+}$. Of course, this shows $3^\circ$.

It remains to show that $1^\circ \Longrightarrow 3^\circ$. Assume that $1^\circ$ is valid, i.e., there is $(x^*, \alpha) \in {\rm cor}\,K^{a+}$ such that \eqref{sep_non_top_4} is valid. Then, Theorem \ref{th:solidness_augmented_dual_cones} ($1^\circ$) yields $\beta := \inf_{x  \in B_K}\,  x^*(x) > \alpha$, and $(x^*, \gamma) \in {\rm cor}\,K^{a+} \subseteq K^{a\#}$ for $\gamma \in (0,\beta)$. Consider $\delta \in (\alpha, \beta)$; hence $(x^*, \delta) \in {\rm cor}\,K^{a+}$. Since $x^*(k) \geq \beta$ for all $k \in B_K$, one has $\varphi_{-x^*, \beta}(k) \leq 0$ for all $k \in K$, and so $K \subseteq C_{-x^*, \beta}^{\leq}$. Because $C_{-x^*, \beta}^{\leq}$ is a closed, convex set, one obtains that $K \subseteq {\rm cl}({\rm conv}\,K) \subseteq  C_{-x^*, \beta}^{\leq}$, and so 
$$
x^*(-\hat k) + \delta ||\hat k|| < x^*(-\hat k) + \beta ||\hat k|| \leq 0 \quad \text{for all } \hat k \in ({\rm cl}({\rm conv}\,K)) \setminus \{0\}.
$$
Because $x^*(a) + \alpha ||a|| > 0$ for $a \in A \setminus \{0\}$ by \eqref{sep_non_top_4}, one has $x^*(a) + \alpha ||a|| \geq 0$
for $a \in {\rm cl}\,A$, and so 
$$x^*(a) + \delta ||a|| > 0 \quad \text{for all } a \in ({\rm cl}\,A) \setminus \{0\}.$$
We conclude the validity of assertion $3^\circ$.
\end{proof}

As an immediate consequence of Theorem \ref{th:solidness_augmented_dual_cones}, Theorem \ref{th:sep_cones_Banach_3} and Lemma \ref{lem:new_lemma_strict_sep_conditions} we have the following result.

\begin{corollary}
    Let $E$ be a real normed space, $K \subseteq E$ a nontrivial cone with the norm-base $B_{K}$, and $A \subseteq E$ a nontrivial cone with the norm-base $B_{A}$. Assume that ${\rm cl}\, S_{-{\rm cl}({\rm conv}\,K)}$ is weakly compact  (e.g. if $E$ is also reflexive). Then, the following assertions are equivalent:  
    \begin{itemize}
        \item[$1^\circ$] $({\rm cl}\,S_A^0) \cap ({\rm cl}\, S_{-K}) = \emptyset$.
        \item[$2^\circ$] $({\rm cl}\,S_{{\rm cl}\, A}^0) \cap ({\rm cl}\, S_{-{\rm cl}({\rm conv}\,K)}) = \emptyset$.
        \item[$3^\circ$] $-K$ and $A$ are strictly separated by $C = C_{x^*, \alpha}^{\leq}$ for some $(x^*, \alpha) \in K^{aw\#} \cap (E^* \times \mathbb{P})$.
        \item[$4^\circ$] $-{\rm cl}({\rm conv}\,K)$ and ${\rm cl}\, A$ are strictly separated by $C = C_{x^*, \alpha}^{\leq}$ for some $(x^*, \alpha) \in K^{aw\#} \cap (E^* \times \mathbb{P})$.
    \end{itemize}
\end{corollary}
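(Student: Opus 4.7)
The strategy is to chain the four equivalences as $1^\circ \iff 3^\circ$, $3^\circ \iff 4^\circ$, and $2^\circ \iff 4^\circ$, obtained by applying Theorem \ref{th:sep_cones_Banach_3} twice (once to $(K,A)$ and once to $(\bar K, \bar A) := ({\rm cl}({\rm conv}\, K), {\rm cl}\, A)$) and using Lemma \ref{lem:new_lemma_strict_sep_conditions} to bridge the two applications. A preliminary observation is that the weak-compactness hypothesis transfers down: since ${\rm cl}\, S_{-K}$ is norm-closed, convex and included in the weakly compact ${\rm cl}\, S_{-\bar K}$, it is weakly closed (by Remark \ref{rem:StrongAndWeak3}) and hence weakly compact; the same reasoning yields that ${\rm cl}_w\, B_K$ and ${\rm cl}_w\, B_{\bar K}$ are weakly compact. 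By Theorem \ref{th:solidness_augmented_dual_cones} ($2^\circ$) this gives the key identifications ${\rm cor}\, K^{a+} = K^{aw\#} \cap (E^* \times \mathbb{P})$ and ${\rm cor}\, \bar K^{a+} = \bar K^{aw\#} \cap (E^* \times \mathbb{P})$.

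Given these, $1^\circ \iff 3^\circ$ is simply Theorem \ref{th:sep_cones_Banach_3} applied to $(K,A)$, while $3^\circ \iff 4^\circ$ is the equivalence of assertions $1^\circ$ and $2^\circ$ in Lemma \ref{lem:new_lemma_strict_sep_conditions} rewritten through the cor-identification. For $2^\circ \iff 4^\circ$, Theorem \ref{th:sep_cones_Banach_3} applied to $(\bar K, \bar A)$ shows that $2^\circ$ is equivalent to the existence of $(x^*, \alpha) \in \bar K^{aw\#} \cap (E^* \times \mathbb{P})$ strictly separating $-\bar K$ and $\bar A$. The direction $2^\circ \Longrightarrow 4^\circ$ then follows from the inclusion $\bar K^{aw\#} \subseteq K^{aw\#}$, which is immediate from $B_K \subseteq B_{\bar K}$ and $\bar K^\# \subseteq K^\#$.

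The main obstacle is the converse $4^\circ \Longrightarrow 2^\circ$, since $\bar K^{aw\#}$ can be strictly smaller than $K^{aw\#}$. To overcome it I would invoke the interval form provided in assertion $4^\circ$ of Lemma \ref{lem:new_lemma_strict_sep_conditions}: starting from $4^\circ$ of the corollary (equivalent via the cor-identification to assertion $2^\circ$ of the lemma), the lemma supplies some $x^*\in E^*$ and an open interval $(\delta_1, \delta_2)$ of parameters $\alpha$ along which $(x^*, \alpha) \in K^{a\#}$ strictly separates $-\bar K$ and $\bar A$. Choosing $\alpha_0 < \alpha_1$ in that interval, the strict separation at $\alpha_1$ forces $x^*(k) > \alpha_1$ for every $k \in B_{\bar K}$, and weak continuity of $x^*$ propagates this inequality to $x^*(k) \geq \alpha_1 > \alpha_0$ on all of ${\rm cl}_w\, B_{\bar K}$; combined with $x^* \in \bar K^\#$ (which is automatic from strict separation with $\alpha_0 > 0$), one obtains $(x^*, \alpha_0) \in \bar K^{aw\#} \cap (E^* \times \mathbb{P})$ still strictly separating $-\bar K$ and $\bar A$, and Theorem \ref{th:sep_cones_Banach_3} applied to $(\bar K, \bar A)$ delivers $2^\circ$.
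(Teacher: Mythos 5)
Your argument is correct and follows the same skeleton as the paper's proof: two applications of Theorem \ref{th:sep_cones_Banach_3} (to $(K,A)$ and to $({\rm cl}({\rm conv}\,K),{\rm cl}\,A)$) bridged by Lemma \ref{lem:new_lemma_strict_sep_conditions}, with Theorem \ref{th:solidness_augmented_dual_cones} ($2^\circ$) providing the identification $K^{aw\#}\cap(E^*\times\mathbb{P})={\rm cor}\,K^{a+}$, and the same preliminary transfer of weak compactness from ${\rm cl}\,S_{-{\rm cl}({\rm conv}\,K)}$ down to ${\rm cl}\,S_{-K}$, ${\rm cl}_w\,B_K$ and ${\rm cl}_w\,B_{{\rm cl}({\rm conv}\,K)}$. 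The only genuine divergence is in $4^\circ \Longrightarrow 2^\circ$, which you single out as the main obstacle and resolve by a detour through the interval form (assertion $4^\circ$) of Lemma \ref{lem:new_lemma_strict_sep_conditions} together with a weak-continuity argument to land back in $({\rm cl}({\rm conv}\,K))^{aw\#}\cap(E^*\times\mathbb{P})$; that workaround is sound. The paper instead disposes of the issue in one line: by \eqref{eq:K+=convKa+=clvonvKa+} one has $K^{a+}=({\rm cl}({\rm conv}\,K))^{a+}$, hence the algebraic interiors coincide, and applying Theorem \ref{th:solidness_augmented_dual_cones} ($2^\circ$) to both cones yields
$({\rm cl}({\rm conv}\,K))^{aw\#}\cap(E^*\times\mathbb{P})={\rm cor}\,K^{a+}=K^{aw\#}\cap(E^*\times\mathbb{P})$,
so the two sets you worry may differ are in fact equal on the positive-$\alpha$ slice, which is all the corollary uses. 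With that observation your ``obstacle'' disappears and $2^\circ\iff 4^\circ$ is literally Theorem \ref{th:sep_cones_Banach_3} applied to $({\rm cl}({\rm conv}\,K),{\rm cl}\,A)$; your route proves the same thing, just at greater length.
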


\begin{proof} The weak compactness of ${\rm cl}\, S_{-{\rm cl}({\rm conv}\,K)}$ ensures the weak compactness of ${\rm cl}_w\, B_{{\rm cl}({\rm conv}\,K)}$, ${\rm cl}\, S_{-K}$ and ${\rm cl}_w\, B_{K}$. Taking into account that $K^{a+} = ({\rm cl}({\rm conv}\,K))^{a+}$ (as mentioned in \eqref{eq:K+=convKa+=clvonvKa+}), by Theorem \ref{th:solidness_augmented_dual_cones} ($2^\circ$) we get 
$({\rm cl}({\rm conv}\,K))^{aw\#} \cap (E^* \times \mathbb{P}) = {\rm cor}\,K^{a+} =  K^{aw\#} \cap (E^* \times \mathbb{P})$.
Then, the result follows by Theorem \ref{th:sep_cones_Banach_3} (applied for $K$ and $A$ as well as for ${\rm cl}({\rm conv}\,K)$ and ${\rm cl}\, A$) and by Lemma \ref{lem:new_lemma_strict_sep_conditions} (equivalence $1^\circ \iff 2^\circ$).
\end{proof}

\begin{remark}
\label{rem:SepNecCondtions}
Suppose that the assumptions of Theorem \ref{th:sep_cones_Banach_3} are valid. Consider some $(x^*, \alpha) \in K^{aw\#}$. Then, \eqref{eq:necessary_sep_cond} (i.e., the conditions \eqref{eq:nec_cond_separation_cones} and \eqref{eq:-clSAcapclSK=emptyConsequence}; or equivalently,  \eqref{eq:necessary_sep_cond2}) is necessary for the fact that $-K$ and $A$ are strictly separated by $C = C_{x^*, \alpha}^{\leq}$ (or equivalently, \eqref{eq:-clSAcapclSK=empty2} is valid); see Section \ref{sec:nonSepAppAndBPCones}. However, \eqref{eq:necessary_sep_cond} does not imply the condition \eqref{eq:-clSAcapclSK=empty2} (so \eqref{eq:necessary_sep_cond} is necessary but not sufficient for strict separation in the above sense). This can be seen in the next example.
\end{remark}

\begin{example} \label{ex:counterex_AcapK1}
	Consider the reflexive Banach space $(\mathbb{R}^2, ||\cdot||_1)$, where $||\cdot||_1$  denotes the Manhattan norm  (also known as $\ell_1$-norm). Define $x^{(\lambda)} := (1- \lambda, \lambda) \in \mathbb{R}^2$ for all $\lambda \in [0,1]$ and put $\Omega^1 := \mathbb{R}_+ \cdot {\rm conv}(\{x^{(0)}, x^{(\frac{1}{5})}\})$, $\Omega^2 := \mathbb{R}_+ \cdot {\rm conv}(\{x^{(\frac{2}{5})}, x^{(\frac{3}{5})}\})$, $\Omega^3 := \mathbb{R}_+ \cdot {\rm conv}(\{x^{(\frac{4}{5})}, x^{(1)}\})$. Clearly, $K := -\Omega^2$ and $A := \Omega^1 \cup \Omega^3$ are nontrivial, pointed, solid, closed cones, $K$ is convex but $A$ is nonconvex. Now, it is easy to check that $A \cap {\rm cl}({\rm conv}(-K)) = \{0\}$ and $0 \notin {\rm cl}\, S_{-K}$ are valid but 
	$({\rm cl}\,S_A^0) \cap ({\rm cl}\, S_{-K}) \neq \emptyset$.
\end{example}

However, under the closedness and convexity of the nontrivial cone $A$, one can prove that \eqref{eq:-clSAcapclSK=empty2} and \eqref{eq:necessary_sep_cond} are equivalent. 

\begin{theorem} \label{th:separationBanachSpacePureConvex} Let $E$ be a real normed space, $K \subseteq E$ a nontrivial cone with the norm-base $B_{K}$, and $A \subseteq E$ a nontrivial, closed, convex cone with the norm-base $B_{A}$. Assume that ${\rm cl}\, S_{-K}$ is weakly compact (e.g. if $E$ is also reflexive).
Then, the following conditions are equivalent:
\begin{itemize}
    \item[$1^\circ$] $({\rm cl}\,S_A^0) \cap ({\rm cl}\, S_{-K}) = \emptyset$.		
    \item[$2^\circ$]  There exists $(x^*, \alpha) \in K^{aw\#} \cap (E^* \times \mathbb{P})$  such that \eqref{sep_non_top_4} is valid.
    \item[$3^\circ$]  There exists $(x^*, \alpha) \in K^{aw\#}$ such that  \eqref{sep_non_top_4} is valid.
    \item[$4^\circ$] $A \cap ({\rm cl}({\rm conv}(-K))) = \{0\}$ and $0 \notin {\rm cl}\, S_{-K}$.
\end{itemize}
If the set ${\rm cl}\,S_{{\rm cl}({\rm conv}\, K)}$ is weakly compact (e.g., if $E$ is reflexive), then any of the assertions $1^\circ$ -- $4^\circ$ is equivalent to
\begin{itemize}
    \item[$5^\circ$] $0 \notin {\rm cl}\, S_{-K}$ and there exists $x^* \in E^* \setminus \{0\}$ such that \eqref{eq:sep_lin_cones} with ${\rm cl}({\rm conv}\,K)$ in the role of $K$ is valid.
\end{itemize}
\end{theorem}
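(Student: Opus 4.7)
The equivalences $1^\circ \iff 2^\circ \iff 3^\circ$ are immediate from Theorem \ref{th:sep_cones_Banach_3}, since its hypotheses (weak compactness of ${\rm cl}\,S_{-K}$) are assumed here. So the real work is to incorporate the closedness and convexity of $A$ to obtain the equivalence $1^\circ \iff 4^\circ$, and then, under the extra weak compactness of ${\rm cl}\,S_{{\rm cl}({\rm conv}\,K)}$, to link $4^\circ$ with the linear separation statement $5^\circ$ via Proposition \ref{prop:Jahn_sep_two_closed_cones}.

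For $1^\circ \Longrightarrow 4^\circ$: this is essentially already contained in the discussion preceding \eqref{eq:necessary_sep_cond} and \eqref{eq:necessary_sep_cond2} in Section \ref{sec:nonSepAppAndBPCones}, where it is shown (without any extra assumption on $A$) that $\eqref{eq:-clSAcapclSK=empty2}$ implies both $0\notin {\rm cl}\,S_{-K}$ and $A \cap {\rm cl}({\rm conv}(-K)) = \{0\}$; I would simply cite that remark. The converse $4^\circ \Longrightarrow 1^\circ$ is where convexity and closedness of $A$ enter. By Theorem \ref{th:0notinclSK} ($2^\circ$)a), applied to the closed, convex cone $A$, we have ${\rm cl}\,S_A^0 = A \cap \mathbb{B} \subseteq A$. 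Moreover, $S_{-K} = {\rm conv}(B_{-K}) \subseteq {\rm conv}(-K) \subseteq {\rm cl}({\rm conv}(-K))$, so ${\rm cl}\,S_{-K} \subseteq {\rm cl}({\rm conv}(-K))$. Combining these gives
\[
({\rm cl}\,S_A^0) \cap ({\rm cl}\,S_{-K}) \subseteq A \cap {\rm cl}({\rm conv}(-K)) = \{0\};
\]
but $0 \notin {\rm cl}\,S_{-K}$ by the second part of $4^\circ$, so the intersection is actually empty, which is $1^\circ$.

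For the last part, suppose also that ${\rm cl}\,S_{{\rm cl}({\rm conv}\,K)}$ is weakly compact. To show $4^\circ \Longrightarrow 5^\circ$, set $K' := {\rm cl}({\rm conv}\,K)$. Then $K'$ is a closed, convex cone and the hypothesis $A \cap (-K') = \{0\}$ from $4^\circ$ matches exactly the assumption of Proposition \ref{prop:Jahn_sep_two_closed_cones} applied to the pair $A$ and $K'$ (where $0 \notin {\rm cl}\,S_{K'}$ is equivalent to $0 \notin {\rm cl}\,S_{-K}$ by \eqref{eq:0NotInClSconvK} and Theorem \ref{th:0notinclSK} ($3^\circ$)). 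This yields $x^* \in E^*\setminus\{0\}$ with $x^*(a) \geq 0 > x^*(k)$ for all $a \in A$ and $k \in -K'\setminus\{0\}$, which is exactly \eqref{eq:sep_lin_cones} with ${\rm cl}({\rm conv}\,K)$ in place of $K$, so $5^\circ$ holds. Conversely, $5^\circ \Longrightarrow 4^\circ$ is immediate: the linear separation in $5^\circ$ forces $A \cap (-{\rm cl}({\rm conv}\,K)\setminus\{0\}) = \emptyset$, i.e. $A \cap {\rm cl}({\rm conv}(-K)) = \{0\}$, and $0 \notin {\rm cl}\,S_{-K}$ is part of the assumption.

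The main obstacle is conceptually mild but must be stated cleanly: the reduction $({\rm cl}\,S_A^0) \subseteq A$, which is the one place where the closedness and convexity of $A$ is essential (without it, the counter-example given in Example \ref{ex:counterex_AcapK1} shows $4^\circ \not\Longrightarrow 1^\circ$). After that step, everything else reduces either to previously proved results (Theorem \ref{th:sep_cones_Banach_3}) or to a direct application of the classical separation tool Proposition \ref{prop:Jahn_sep_two_closed_cones}, with the weak compactness assumption on ${\rm cl}\,S_{{\rm cl}({\rm conv}\,K)}$ invoked precisely so that the latter proposition is applicable with $K' = {\rm cl}({\rm conv}\,K)$.
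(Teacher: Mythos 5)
Your proposal is correct and follows essentially the same route as the paper: $1^\circ\iff 2^\circ\iff 3^\circ$ via Theorem \ref{th:sep_cones_Banach_3}, $1^\circ\Rightarrow 4^\circ$ from the discussion in Section \ref{sec:nonSepAppAndBPCones}, $4^\circ\Rightarrow 1^\circ$ via ${\rm cl}\,S_A^0\subseteq A$ (the paper phrases this through the equivalence \eqref{eq:necessary_sep_cond} $\Leftrightarrow$ \eqref{eq:necessary_sep_cond2}, you spell out the same containment directly), and the link to $5^\circ$ via Proposition \ref{prop:Jahn_sep_two_closed_cones} together with $0\notin{\rm cl}\,S_{-K}\iff 0\notin{\rm cl}\,S_{{\rm cl}({\rm conv}\,K)}$. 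No gaps.
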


\begin{proof} Theorem \ref{th:sep_cones_Banach_3} shows that $1^\circ \iff 2^\circ \iff 3^\circ$. As mentioned in Remark \ref{rem:SepNecCondtions}, we have  $1^\circ \Longrightarrow 4^\circ$. The implication $4^\circ \Longrightarrow 1^\circ$ follows by the fact that \eqref{eq:necessary_sep_cond} $\Longleftrightarrow$ \eqref{eq:necessary_sep_cond2} and ${\rm cl}\,S_A^0 \subseteq A$ by the closedness and convexity of $A$ (see Section \ref{sec:nonSepAppAndBPCones}).

Now, assume that ${\rm cl}\,S_{{\rm cl}({\rm conv}\, K)}$ is weakly compact.
The implication $4^\circ \Longrightarrow 5^\circ$ is a direct consequence of \eqref{eq:0NotInClSconvK} and 
Theorem \ref{th:0notinclSK} ($3^\circ$), which ensure
$0  \notin {\rm cl}\,S_{{\rm cl}({\rm conv}\,K)} \iff 0 \notin {\rm cl}\,S_{{\rm cl}({\rm conv}(-K))} \iff 0 \notin {\rm cl}\,S_{-K}$, 
and the linear separation result in Proposition \ref{prop:Jahn_sep_two_closed_cones}. 
The remaining implication $5^\circ \Longrightarrow 4^\circ$ is obvious, since \eqref{eq:sep_lin_cones} (with ${\rm cl}({\rm conv}\,K)$ in the role of $K$) $\Longrightarrow$ $A \cap ({\rm cl}({\rm conv}(-K)) = \{0\}$.
\end{proof}

\begin{remark}[related to the work \cite{GuenKhaTam22}]               \label{rem:GKT_general_3}
    Under the assumptions of Theorem \ref{th:separationBanachSpacePureConvex}, if $||\cdot||$ is weakly continuous (or equivalently, $E$ has finite dimension), and $K$ is closed and convex, then the equivalences of assertions $1^\circ$, $2^\circ$, $4^\circ$ and $5^\circ$ in Theorem \ref{th:separationBanachSpacePureConvex} can also be concluded by \cite[Th. 5.16 and Rem. 5.17]{GuenKhaTam22} (applied for the real locally convex space $(E, \tau_w)$).
\end{remark}

\begin{remark}
The condition $0 \notin {\rm cl}\, S_{-K}$ in Theorem \ref{th:separationBanachSpacePureConvex} ($4^\circ$) is essential, as Example \ref{ex:counter_example_for_Kasimbeyli_theorem} shows (there $K$ and $A$ are nontrivial, pointed, closed, convex cones in $E$ with $A \cap (-K) = \{0\}$, $0 \in {\rm cl}\, S_{-K}$ and $({\rm cl}\,S_A^0) \cap ({\rm cl}\, S_{-K}) \neq \emptyset$).
\end{remark}

We get the following result (cf. Kasimbeyli \cite[Th. 4.3]{Kasimbeyli2010}) as a direct consequence of our main Theorem \ref{th:sep_cones_Banach_3}. 

\begin{proposition}\label{cor:sep_main_top_kasimbeyli}
	Assume that $E$ is a real normed space, $K \subseteq E$ is a nontrivial cone with the norm-base $B_{K}$, and  $A \subseteq E$ is a nontrivial cone with the norm-base $B_{A}$. Suppose that ${\rm cl}\, S_{-K}$ is weakly compact (e.g. if $E$ is also reflexive). Then, the following assertions are equivalent:
	\begin{itemize}
		\item[$1^\circ$] 
		$({\rm cl}\,S_{{\rm bd}\,A}^0) \cap ({\rm cl}\, S_{-K}) = \emptyset$.
		\item[$2^\circ$]  
        There exists $(x^*, \alpha) \in K^{aw\#} \cap (E^* \times \mathbb{P})$  such that
		\begin{align}
		x^*(a) + \alpha ||a|| &  >  0 > x^*(k) + \alpha ||k||  \quad \mbox{ for all } a \in ({\rm bd}\,A)\setminus \{0\},\,k \in -K \setminus \{0\}.  \label{sep_non_top_6} 
		\end{align}
		\item[$3^\circ$]   
        There exists $(x^*, \alpha) \in K^{aw\#}$  such that \eqref{sep_non_top_6} is valid.
	\end{itemize}	
\end{proposition}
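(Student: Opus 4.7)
The plan is to apply Theorem \ref{th:sep_cones_Banach_3} directly, with $\widetilde{A} := {\rm bd}\, A$ playing the role of $A$. The only preliminary point to check is that $\widetilde{A}$ is itself a cone in $E$: for any $\lambda > 0$, scalar multiplication by $\lambda$ is a homeomorphism of $E$ that maps $A$ onto itself, hence maps ${\rm bd}\, A$ onto itself; and $0 \in {\rm bd}\, A$ follows from the nontriviality of $A$, since every neighbourhood of $0$ meets both $A$ and $E \setminus A$ (using that $A$ is a cone with $\{0\} \subsetneq A \subsetneq E$, so any $x \notin A$ yields arbitrarily small points $tx \notin A$ for $t \downarrow 0$).

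Assuming $\widetilde{A}$ is nontrivial, I would invoke Theorem \ref{th:sep_cones_Banach_3} applied to the cones $K$ and $\widetilde{A}$: the weak compactness hypothesis on ${\rm cl}\, S_{-K}$ is unchanged and the norm-base $B_{\widetilde{A}}$ takes the place of $B_A$. Observe further that the strict separation of $-K$ and $\widetilde{A}$ by the cone $C_{x^*,\alpha}^{\leq}$, expressed as \eqref{sep_non_top_4} with $\widetilde{A}$ in place of $A$, is literally the inequality \eqref{sep_non_top_6}. Consequently, the three equivalent assertions of Theorem \ref{th:sep_cones_Banach_3} translate verbatim into the three equivalent assertions $1^\circ$, $2^\circ$, $3^\circ$ of the proposition, and no further work is required.

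The only real obstacle is the degenerate possibility ${\rm bd}\, A = \{0\}$, which can occur even for a nontrivial cone $A$ (take e.g.\ $A = \mathbb{R}_+$ in $E = \mathbb{R}$); here Theorem \ref{th:sep_cones_Banach_3} does not directly apply, since $B_{\widetilde{A}} = \emptyset$ and $\widetilde{A}$ is not a nontrivial cone. In that situation, however, the $a$-inequality in \eqref{sep_non_top_6} is vacuous, $S_{\widetilde{A}}^0 = \{0\}$, and all three assertions reduce to equivalent statements involving $K$ alone; their equivalence is then read off from Theorem \ref{th:0notinclSK}($3^\circ$) together with Lemma \ref{lem:KaPlushasPosElement}($3^\circ$), which relates $K^{aw\#}$ and $K^{aw\#} \cap (E^* \times \mathbb{P})$.
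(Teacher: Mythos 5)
Your proposal matches the paper exactly: the paper presents Proposition \ref{cor:sep_main_top_kasimbeyli} with no separate proof, describing it as a direct consequence of Theorem \ref{th:sep_cones_Banach_3} applied with ${\rm bd}\,A$ in the role of $A$, which is precisely your argument, and you are in fact more careful than the paper in verifying that ${\rm bd}\,A$ is a cone and in treating the degenerate case ${\rm bd}\,A = \{0\}$. One minor citation point in that degenerate case: the implication $K^{aw\#} \neq \emptyset \Rightarrow K^{aw\#} \cap (E^* \times \mathbb{P}) \neq \emptyset$ is Theorem \ref{th:0notinclSK} ($4^\circ$) (which uses the weak compactness of ${\rm cl}_w\,B_K$, available here because ${\rm cl}\,S_{-K}$ is weakly compact), rather than ($3^\circ$) combined with Lemma \ref{lem:KaPlushasPosElement} ($3^\circ$), which only gives the reverse direction.
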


\begin{remark}[related to the work \cite{GuenKhaTam22}] \label{rem:GKT_general_4}
 Under the assumptions of Proposition \ref{cor:sep_main_top_kasimbeyli}, if $B_K$ is weakly compact (e.g. if $||\cdot||$ is weakly continuous (or equivalently, $E$ has finite dimension), and $K$ is closed), then the equivalence of the assertions $1^\circ$ and $2^\circ$ in Proposition \ref{cor:sep_main_top_kasimbeyli} follows also from \cite[Cor. 5.15]{GuenKhaTam22} (applied for the real locally convex space $(E, \tau_w)$).
\end{remark}

\begin{remark}[related to the work \cite{Kasimbeyli2010}] \label{rem:Kasimbeyli_paper}
    If, in addition, $A$ is closed, then
    $S_{{\rm bd}\,A}^0 = {\rm conv}(B_{{\rm bd}\,A} \cup \{0\}) = {\rm conv}((B_{A} \cap {\rm bd}\,A) \cup \{0\})$.
    The latter set is given in Kasimbeyli \cite[Th. 4.3]{Kasimbeyli2010}.  
    In comparison to the separation result derived in \cite[Th. 4.3]{Kasimbeyli2010}, our Proposition \ref{cor:sep_main_top_kasimbeyli} assumes some weaker conditions (concerning convexity and closedness requirements) w.r.t. the involved cones. It is important to mention,  Example \ref{ex:counter_example_for_Kasimbeyli_theorem} also shows that, under the assumptions of Proposition \ref{cor:sep_main_top_kasimbeyli}, one can not replace the set $K^{aw\#}$ by the upper set $K^{a\#}$ in assertion $3^\circ$ of Proposition \ref{cor:sep_main_top_kasimbeyli} in general (even under the convexity and closedness of both cones $K$ and $A$, and the reflexivity of $E$). Thus, one implication in the cone separation result by Kasimbeyli \cite[Th. 4.3]{Kasimbeyli2010} is invalid in the case that $B_K$ is not weakly compact.
    However, under the weak compactness of $B_{K}$ (e.g. if $E$ has finite dimension and $K$ is closed) one can replace the set $K^{aw\#}$ by $K^{a\#}$ in assertion $3^\circ$ of Proposition \ref{cor:sep_main_top_kasimbeyli} (since $K^{aw\#} = K^{a\#}$).
\end{remark}

\begin{corollary} \label{cor:GMPP}
    Assume that $E$ is a real normed space, $K \subseteq E$ is a nontrivial cone with the norm-base $B_{K}$, and  $A \subseteq E$ is a nontrivial cone with the norm-base $B_{A}$. Suppose that ${\rm cl}\, S_{-K}$ is weakly compact  (e.g. if $E$ is also reflexive). Then, the following assertions are equivalent:
    \begin{itemize}
        \item[$1^\circ$] 
		$({\rm cl}\,S_{{\rm bd}\,A}^0) \cap ({\rm cl}\, S_{-K}) = \emptyset$.
        \item[$2^\circ$]  
        There exist $\delta_2 > \delta_1 > 0$ and $x^* \in E^*$ such that, for any $\alpha \in (\delta_1, \delta_2)$, we have $(x^*, \alpha) \in K^{a\#}$, and \eqref{sep_non_top_6} with ${\rm cl}({\rm conv}\,K)$ in the role of $K$ is valid.  
   \end{itemize}
\end{corollary}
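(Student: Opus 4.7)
The plan is to derive the equivalence by stacking three earlier results: Proposition \ref{cor:sep_main_top_kasimbeyli} characterizes $1^\circ$ via a single separator $(x^*,\alpha) \in K^{aw\#} \cap (E^* \times \mathbb{P})$ satisfying \eqref{sep_non_top_6}; Theorem \ref{th:solidness_augmented_dual_cones}$(2^\circ\text{a})$ identifies this set with ${\rm cor}\,K^{a+}$; and Lemma \ref{lem:new_lemma_strict_sep_conditions} upgrades an element in ${\rm cor}\,K^{a+}$ to a one-parameter family in $K^{a\#}$. So the flow is $1^\circ \Longleftrightarrow$ ``$\exists\,(x^*,\alpha) \in {\rm cor}\,K^{a+}$ satisfying \eqref{sep_non_top_6}'' $\Longleftrightarrow 2^\circ$.

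The first step is to check that Theorem \ref{th:solidness_augmented_dual_cones}$(2^\circ\text{a})$ applies, i.e., that ${\rm cl}_w\,B_K$ is weakly compact. Since ${\rm cl}\,S_{-K}$ is weakly compact by hypothesis, so is ${\rm cl}\,S_K = -{\rm cl}\,S_{-K}$; moreover ${\rm cl}_w\,B_K \subseteq {\rm cl}_w\,S_K = {\rm cl}\,S_K$ by Remark \ref{rem:StrongAndWeak3} (weak closure coincides with norm closure for the convex set $S_K$), hence ${\rm cl}_w\,B_K$ is weakly closed inside the weakly compact set ${\rm cl}\,S_K$. This yields ${\rm cor}\,K^{a+} = K^{aw\#} \cap (E^* \times \mathbb{P})$.

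Next I would apply Proposition \ref{cor:sep_main_top_kasimbeyli} to obtain $1^\circ \Longleftrightarrow$ the existence of $(x^*,\alpha) \in K^{aw\#} \cap (E^* \times \mathbb{P}) = {\rm cor}\,K^{a+}$ satisfying \eqref{sep_non_top_6}. Finally, I would apply Lemma \ref{lem:new_lemma_strict_sep_conditions} with ${\rm bd}\,A$ in the role of $A$: since ${\rm bd}\,A$ is automatically closed, one has ${\rm cl}({\rm bd}\,A) = {\rm bd}\,A$, so the separation condition in assertion $1^\circ$ of the lemma matches \eqref{sep_non_top_6} verbatim, and the separation condition in assertion $4^\circ$ of the lemma becomes \eqref{sep_non_top_6} with ${\rm cl}({\rm conv}\,K)$ in the role of $K$. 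The equivalence $1^\circ \Longleftrightarrow 4^\circ$ of Lemma \ref{lem:new_lemma_strict_sep_conditions} then gives precisely $2^\circ$ of the corollary.

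The main obstacle is the slightly delicate point that Lemma \ref{lem:new_lemma_strict_sep_conditions} is stated for a \emph{nontrivial} cone $A$ admitting a norm-base, so one must justify invoking it with ${\rm bd}\,A$. For a nontrivial cone $A \subseteq E$ the set ${\rm bd}\,A$ is indeed a cone (it is closed under positive scaling, since $A$ and $E\setminus A$ both are), and if $1^\circ$ is meaningful (i.e., ${\rm bd}\,A\neq\{0\}$ so that its norm-base is nonempty) the lemma applies; the degenerate cases ${\rm bd}\,A = \{0\}$ or $B_{{\rm bd}\,A} = \emptyset$ collapse both $1^\circ$ and $2^\circ$ to the same elementary condition $0 \notin {\rm cl}\,S_{-K}$ via Theorem \ref{th:0notinclSK}$(3^\circ)$, so the equivalence persists. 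Beyond this bookkeeping, the argument is a direct chaining of the cited results.
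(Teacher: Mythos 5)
Your proposal is correct and follows essentially the same route as the paper, which likewise obtains the corollary by combining Proposition \ref{cor:sep_main_top_kasimbeyli}, Theorem \ref{th:solidness_augmented_dual_cones} ($2^\circ$) and Lemma \ref{lem:new_lemma_strict_sep_conditions} applied with ${\rm bd}\,A$ in the role of $A$. Your additional checks (weak compactness of ${\rm cl}_w\,B_K$, the identity ${\rm cl}({\rm bd}\,A)={\rm bd}\,A$, and the degenerate case $B_{{\rm bd}\,A}=\emptyset$) only make explicit what the paper leaves implicit.
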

    
\begin{proof}
    This result is a direct consequence of Theorem \ref{th:solidness_augmented_dual_cones} ($2^\circ$), 
    Lemma \ref{lem:new_lemma_strict_sep_conditions} (applied for ${\rm bd}\,A$ in the role of $A$) and Proposition \ref{cor:sep_main_top_kasimbeyli}.
\end{proof}

\begin{remark}[related to the work \cite{CastanoEtAl2023}]            \label{rem:new_paper}
    The result in Corollary \ref{cor:GMPP} can also be derived (under the additional assumption of the reflexivity of $E$) by Proposition \ref{cor:sep_main_top_kasimbeyli} and the separation result by Garc\'{i}a-Casta\~{n}o, Melguizo-Padial and Parzanese \cite[Th. 3.1,  (i) $\Leftrightarrow$ (iii)]{CastanoEtAl2023} ($E$ being reflexive, $1^\circ \Leftrightarrow (K,-A)$ has the strict separation property SSP in the sense of \cite[Def. 2]{CastanoEtAl2023}). Our Lemma \ref{lem:new_lemma_strict_sep_conditions}, which is proven with the aid of
   Theorem \ref{th:solidness_augmented_dual_cones} ($1^\circ$), even more shows that \cite[Th. 3.1]{CastanoEtAl2023} (which is formulated in a normed setting) is based on elements in the algebraic interior of the augmented dual cone. Thus, in view of our derived results, one can understand the relationships between \cite[Th. 4.3]{Kasimbeyli2010} (a formulation with a specific $\alpha$) and \cite[Th. 3.1]{CastanoEtAl2023} (a formulation with an $\alpha$-interval). 
\end{remark}

Notice that our main Theorems \ref{th:sep_cones_Banach_1} and \ref{th:sep_cones_Banach_3} are formulated more generally for two cones $A$ and $K$ and the condition \eqref{sep_non_top_4} is involved. In contrast, Proposition \ref{cor:sep_main_top_kasimbeyli} and the results by Kasimbeyli \cite[Th. 4.3]{Kasimbeyli2010} and by Garc\'{i}a-Casta\~{n}o, Melguizo-Padial and Parzanese \cite[Th. 3.1]{CastanoEtAl2023} consider two cones $A$ and $K$ and the condition \eqref{sep_non_top_6}. Clearly, if $A$ is closed, then \eqref{sep_non_top_4} implies  \eqref{sep_non_top_6}.

\section{Conclusions} \label{sec:conclusion}

It is well known that cone separation theorems are important in different fields of mathematics (such as variational analysis, convex analysis, convex geometry, optimization). 
In this paper, we derived new strict cone separation results in the real (reflexive) normed space setting (see Theorem \ref{th:sep_cones_Banach_1}, Theorem \ref{th:sep_cones_Banach_3} and Proposition \ref{cor:sep_main_top_kasimbeyli})  following the nonlinear and nonsymmetric separation approach by Kasimbeyli \cite{Kasimbeyli2010}, which is based on augmented dual cones and Bishop-Phelps type (normlinear) separating functions.  
In our considerations, we established the relationships of our results with the result by
Kasimbeyli \cite{Kasimbeyli2010} and the recent results by
Garc\'{i}a-Casta\~{n}o, Melguizo-Padial and Parzanese \cite{CastanoEtAl2023} and G\"unther, Khazayel and Tammer \cite{GuenKhaTam22}. In particular, as key tools for deriving our new strict cone separation theorems we used
\begin{itemize}
    \item[$\bullet$] the set $K^{aw\#}$ given by \eqref{eq:defKawSharp} (for a nontrivial, pointed cone $K$ in a real normed space with the norm-base $B_K$), which is a subset of the well known set $K^{a\#}$ (as considered in \cite{Kasimbeyli2010}) 
and is nonempty (under the assumption that ${\rm cl}_w\,B_K$ is weakly compact) if and only if $0 \notin {\rm cl}({\rm conv}(B_K))$ (see Theorem \ref{th:0notinclSK}),
\item[$\bullet$] a new characterization of the algebraic interior of augmented dual cones in real (reflexive) normed spaces (see Theorem \ref{th:solidness_augmented_dual_cones}),
\item[$\bullet$] a classical linear separation theorem for convex sets (see Proposition \ref{prop:seperationCompactJahn2}).
\end{itemize}

We aim to use our strict cone separation theorems for deriving some new scalarization results (based on the conic scalarization method by Kasimbeyli \cite{Kasimbeyli2010}, \cite{Kasimbeyli2013}) for vector optimization problems in real (reflexive) normed spaces. 
Moreover, we are trying to employ our derived cone separation theorems in order to develop separation theorems for (not necessarily convex) sets without cone properties in real (reflexive) normed spaces. 

\section*{Acknowledgment} 

The authors are very grateful to Constantin Z\v{a}linescu for his valuable advice and stimulating discussions, which significantly improved the quality of the paper.



\end{document}